\documentclass[a4paper,11pt,reqno]{article}

\usepackage[utf8]{inputenc}
\usepackage[T1]{fontenc}
\usepackage{lmodern}
\usepackage[english]{babel}
\usepackage{microtype}

\usepackage{amsmath,amssymb,amsfonts,amsthm}
\usepackage{mathtools,accents}
\usepackage{mathrsfs}
\usepackage{aliascnt}
\usepackage{braket}
\usepackage{bm}
\usepackage{esint}

\usepackage[a4paper,margin=3cm]{geometry}
\usepackage[citecolor=blue,colorlinks]{hyperref}

\usepackage{enumerate}
\usepackage{xcolor}
\usepackage{verbatim}

\newtheorem{theorem}{Theorem}
\newtheorem{lemma}{Lemma}
\newtheorem{definition}{Definition}
\newtheorem{prop}{Proposition}

\newtheorem{remark}{Remark}
\newtheorem{claim}{Claim}

\newcommand{\R}{\mathbb{R}}
\newcommand{\N}{\mathbb{N}}

\newcommand{\cV}{\mathcal{V}}
\newcommand{\cL}{\mathcal{L}}
\newcommand{\cE}{\mathcal{E}}
\newcommand{\cP}{\mathcal{P}}
\newcommand{\cI}{\mathcal{I}}
\newcommand{\cD}{\mathcal{D}}
\newcommand{\Ch}{{\sf Ch}}
\newcommand{\ov}{\overline}
\newcommand{\interior}{\mathring}

\DeclareMathOperator{\CD}{CD}

\DeclareMathOperator{\RCD}{RCD}

\DeclareMathOperator{\supp}{supp}
\DeclareMathOperator{\Lip}{Lip}


\makeatletter
\def\newaliasedtheorem#1[#2]#3{
  \newaliascnt{#1@alt}{#2}
  \newtheorem{#1}[#1@alt]{#3}
  \expandafter\newcommand\csname #1@altname\endcsname{#3}
}
\makeatother

\numberwithin{equation}{section}

\newcommand{\eps}{\varepsilon}

\let\phi\varphi

\newcommand{\dist}{\mathsf{d}}

\newcommand{\meas}{\mathfrak{m}}


\newcommand{\di}{\mathop{}\!\mathrm{d}}

\newcommand{\restr}{\raisebox{-.1618ex}{$\bigr\rvert$}}


\title{Weighted Sobolev Inequalities in $\CD$(0,N) spaces}
\author{David Tewodrose\footnote{CY Cergy Paris University, david.tewodrose@cyu.fr}}

\begin{document}

\maketitle

\begin{abstract}
In this note, we prove global weighted Sobolev inequalities on non-compact $\CD(0,N)$ spaces satisfying a suitable growth condition, extending to possibly non-smooth and non-Riemannian structures a previous result from \cite{Minerbe} stated for Riemannian manifolds with non-negative Ricci curvature. We use this result in the context of $\RCD(0,N)$ spaces to get a uniform bound of the corresponding weighted heat kernel via a weighted Nash inequality.
\end{abstract}



\tableofcontents

\section{Introduction}

Riemannian manifolds with non-negative Ricci curvature have strong analytic properties. Indeed, the doubling condition and the local $L^2$-Poincaré inequality are satisfied on such spaces, and they imply many important results, like the well-known Li-Yau Gaussian estimates for a class of Green functions including the heat kernel \cite{LiYau} or powerful local Sobolev inequalities and parabolic Harnack inequalities (see e.g.~\cite{Saloff-Coste}).

In the recent years, several classes of possibly non-smooth metric measure spaces containing the collection of Riemannian manifolds with non-negative Ricci curvature have been under investigation, both from a geometric and an analytic point of view. For instance, in the context of measure spaces endowed with a suitable Dirichlet form, Sturm proved existence and uniqueness of the fundamental solution of parabolic operators along with Gaussian estimates and parabolic Harnack inequalities \cite{Sturm95, Sturm96}, provided the doubling and Poincaré properties hold. Afterwards, general doubling spaces with Poincaré-type inequalities were studied at length by Hajlasz and Koskela \cite{HajlaszKoskela} who proved local Sobolev-type inequalities, a Trudinger inequality, a Rellich-Kondrachov theorem, and many related results.

Approximately a decade ago, Sturm \cite{Sturm06} and Lott and Villani \cite{LottVillani} independently proposed the curvature-dimension condition $\CD(0,N)$, for $N \in [1,+\infty)$, as an extension of non-negativity of the Ricci curvature and bound from above by $N$ of the dimension for possibly non-smooth metric measure spaces. Coupled with the infinitesimal Hilbertiannity introduced later on by Ambrosio, Gigli and Savaré \cite{AmbrosioGigliSavare14} to rule out non-Riemannian structures, the $\CD(0,N)$ condition leads to the stronger $\RCD(0,N)$ condition, where $\mathrm{R}$ stands for Riemannian.

The classes of $\CD(0,N)$ and $\RCD(0,N)$ spaces have been extensively studied over the past few years, and it is by now well-known that they both contain the measured Gromov-Hausdorff closure of the class of Riemannian manifolds with non-negative Ricci curvature and dimension lower than $N$, as well as Alexandrov spaces with non-negative generalized sectional curvature and locally finite and non-zero $n$-dimensional Hausdorff measure, $n$ being lower that $N$. Moreover, $\CD(0,N)$ spaces satisfy the doubling and Poincaré properties, and $\RCD(0,N)$ spaces are, in addition, endowed with a regular and strongly local Dirichlet form called Cheeger energy (see Section 2). Therefore, the works of Sturm \cite{Sturm95, Sturm96} imply existence and uniqueness of an heat kernel, which by the way satisfies Gaussian estimates, on $\RCD(0,N)$ spaces. 

One of the interest of the $\CD(0,N)$ and $\RCD(0,N)$ conditions, and of the more general $\CD(K,N)$ and $\RCD(K,N)$ conditions for arbitrary $K\in \R$, is the possibility of proving classical functional inequalities on spaces with rather loose structure thanks to optimal transport or gradient flow arguments. In this regard, Lott and Villani obtained in \cite[Th.~5.29]{LottVillani2} a global Sobolev-type inequality for $\CD(K,N)$ spaces with $K>0$ and $N\in(2,+\infty)$. Later on, in their striking work \cite[Th.~1.11]{CavallettiMondino2}, Cavaletti and Mondino proved a global Sobolev-type inequality with sharp constant for bounded essentially non-branching $\CD^*(K,N)$ spaces with $K\in \R$ and $N\in(1,+\infty)$; in case $K>0$ and $N>2$, they get the classical Sobolev inequality with sharp constant. This last inequality had been previously justified on $\RCD^*(K,N)$ spaces with $K>0$ and $N>2$ by Profeta \cite{Profeta}.


The aim of this note is to provide a new related analytic result, namely a global weighted Sobolev inequality, for certain non-compact $\CD(0,N)$ spaces with $N>2$. It is worth underlying that our result does not require the Riemannian synthetic condition $\RCD(0,N)$. Here and throughout the paper, if $(X,\dist,\meas)$ is a metric measure space, we write $B_r(x)$ for the ball of radius $r>0$ centered at $x \in X$, and $V(x,r)$ for $\meas(B_r(x))$. 

\begin{theorem}[Weighted Sobolev inequalities]\label{th:weightedSobolev}
Let $(X,\dist,\meas)$ be a $\CD(0,N)$ space with $N > 1$. Assume that there exists $1 < \eta \le N$ such that
\begin{equation}\label{eq:growthcondition}
0 < \Theta_{inf} := \liminf\limits_{r \to +\infty} \frac{V(o,r)}{r^\eta} \le \Theta_{sup}:= \limsup\limits_{r \to +\infty} \frac{V(o,r)}{r^\eta} < + \infty
\end{equation}
for some $o \in X$. Then for any $1 \le p < \eta$, there exists a constant $C>0$, depending only on $N$, $\eta$, $\Theta_{inf}$, $\Theta_{sup}$ and $p$, such that for any continuous function $u : X \to \R$ admitting an upper gradient $g \in L^p(X,\meas)$,
\[
\left( \int_{X} |u|^{p^*} d\mu  \right)^{\frac{1}{p^*}}\le C \left( \int_{X} g^p  d\meas \right)^{\frac{1}{p}}
\]
where $p^*= Np/(N-p)$ and $\mu$ is the measure absolutely continuous with respect to $\meas$ with density 
$w_o = V(o,\dist(o,\cdot))^{p/(N-p)} \dist(o,\cdot)^{-Np/(N-p)}$.
\end{theorem}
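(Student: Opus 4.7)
The plan is to adapt the argument of Minerbe~\cite{Minerbe} to the synthetic setting by replacing smooth tools with metric ones. The two crucial structural inputs on a $\CD(0,N)$ space are the Bishop-Gromov inequality, which yields uniform doubling, and the $(1,1)$-Poincaré inequality (due to Rajala). Feeding these into the Hajlasz-Koskela machinery produces, on every ball $B$ of radius $r$, the local Sobolev-Poincaré inequality
\[
\Bigl(\fint_B |u - u_B|^{p^*}\,d\meas\Bigr)^{1/p^*} \le C\, r \Bigl(\fint_{\lambda B} g^p\,d\meas\Bigr)^{1/p},
\]
with constants $C, \lambda$ depending only on $N$ and $p$, for any continuous $u$ with upper gradient $g$. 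This is the only local analytic tool that the proof will use.

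Next, I would introduce the dyadic decomposition $B_k := B_{2^k}(o)$, $A_k := B_{k+1}\setminus B_k$ for $k \in \Z$, set $V_k := V(o,2^k)$, and split
\[
\int_X |u|^{p^*} w_o\,d\meas = \sum_{k \in \Z} \int_{A_k} |u|^{p^*} w_o\,d\meas.
\]
Doubling gives the pointwise bound $w_o \le C V_k^{p/(N-p)}\,2^{-kNp/(N-p)}$ on $A_k$. Now use $|u|^{p^*} \le 2^{p^*-1}\bigl(|u - u_{B_{k+1}}|^{p^*} + |u_{B_{k+1}}|^{p^*}\bigr)$ and apply the local Sobolev-Poincaré inequality on $B_{k+1}$ to the first term. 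The identity $p^* = Np/(N-p)$ is chosen exactly so that the scale factor $2^{kp^*}$ and the volume factor $V_k^{-p^*/N}$ produced by local Sobolev cancel the weight $V_k^{p/(N-p)}\,2^{-kNp/(N-p)}$ \emph{exactly}, yielding
\[
\int_{A_k} |u - u_{B_{k+1}}|^{p^*}\,w_o\,d\meas \le C \Bigl(\int_{\lambda B_{k+1}} g^p\,d\meas\Bigr)^{p^*/p}.
\]
Summing in $k$, the finite overlap of the balls $\lambda B_{k+1}$ together with the elementary subadditivity $\sum_k a_k^{p^*/p} \le (\sum_k a_k)^{p^*/p}$ (valid since $p^* > p$) controls this piece by $C\|g\|_{L^p(\meas)}^{p^*}$.

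The delicate part is the mean-value piece $\int_{A_k} |u_{B_{k+1}}|^{p^*}\,w_o\,d\meas$. Here I would first prove the inequality on continuous functions with bounded support, so that $u_{B_K} \to 0$ as $K \to +\infty$, and then obtain the general case by approximation (using truncation and cut-off, which is compatible with the upper-gradient formalism). For compactly supported $u$, a telescoping argument $u_{B_{k+1}} = -\sum_{j \ge k+1}(u_{B_{j+1}} - u_{B_j})$ combined with $|u_{B_{j+1}} - u_{B_j}| \le C\cdot 2^j V_j^{-1/p}\|g\|_{L^p(\lambda B_{j+1})}$ (from Hölder and Poincaré) reduces the matter to a weighted discrete Hardy-type inequality. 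It is here that the precise hypothesis $p < \eta$ is used: the lower bound $V_k \gtrsim \Theta_{\inf}\,2^{k\eta}$ provides the geometric decay required for the Hardy inequality to close, while $\Theta_{\sup} < +\infty$ ensures that the weights $\int_{A_k} w_o\,d\meas$ are dominated by $2^{kN(\eta-p)/(N-p)}$ with the \emph{correct} constant.

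The main obstacle, as I see it, is this last step: both sides of the growth condition~\eqref{eq:growthcondition} must be used simultaneously, and the upper-gradient formulation precludes the chain-rule or integration-by-parts shortcuts available in the smooth Riemannian proof of~\cite{Minerbe}. Everything else, including the exact matching of the exponents that makes $w_o$ the right weight, is essentially algebraic once the local Sobolev-Poincaré inequality is in hand.
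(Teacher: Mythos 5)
Your plan breaks down at the oscillation term, and the failure is structural, not cosmetic. You estimate $\int_{A_k}|u-u_{B_{k+1}}|^{p^*}w_o\,\di\meas$ by applying the local Sobolev--Poincar\'e inequality on the \emph{full ball} $B_{k+1}=B_{2^{k+1}}(o)$, which produces the right-hand side $\bigl(\int_{\lambda B_{k+1}}g^p\,\di\meas\bigr)^{p^*/p}$, and you then invoke ``finite overlap of the balls $\lambda B_{k+1}$'' to sum in $k$. But these balls are concentric and nested: every point of $X$ lies in $\lambda B_{k+1}$ for all large $k$, so they have unbounded overlap, $\int_{\lambda B_{k+1}}g^p\,\di\meas\to\|g\|_{L^p(\meas)}^p>0$ as $k\to+\infty$, and the series $\sum_k\bigl(\int_{\lambda B_{k+1}}g^p\,\di\meas\bigr)^{p^*/p}$ diverges whenever $g\not\equiv 0$. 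The subadditivity $\sum_k a_k^{p^*/p}\le(\sum_k a_k)^{p^*/p}$ is useless here because $\sum_k a_k=+\infty$. This is exactly the point the paper's argument is designed to handle: the local Sobolev--Neumann inequalities must be localized on the \emph{annular} pieces $U_{i,a}$ with enlargements $U_{i,a}^*\subset U_{i,a}^\#$ that have bounded overlap (constant $Q_1$ depending only on $N$), and obtaining a Sobolev--Neumann inequality on an annulus is not automatic --- it requires knowing that points of a sphere can be joined inside a slightly larger annulus (Proposition \ref{prop:RCA}, which uses the growth condition with $\eta>1$), a covering of the annulus by small balls, and the Neumann patching theorem (Theorem \ref{th:patching2}, used in Claim \ref{claim2}); moreover the annuli may be disconnected, which forces the gluing of small components in Step 1 so that the measure control condition holds. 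None of this is recoverable from ball-averaged inequalities alone.

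A secondary warning on the mean-value piece: with $\nu_k:=\int_{A_k}w_o\,\di\meas\approx 2^{kN(\eta-p)/(N-p)}$ and the telescoping bound $|u_{B_{j+1}}-u_{B_j}|\lesssim 2^{j}V_j^{-1/p}\|g\|_{L^p(\lambda B_{j+1},\meas)}$, the crude estimate $|u_{B_{k+1}}|\lesssim \|g\|_{L^p}\sum_{j\ge k+1}2^{j(1-\eta/p)}\approx 2^{k(1-\eta/p)}\|g\|_{L^p}$ gives $\sum_k\nu_k|u_{B_{k+1}}|^{p^*}\lesssim\|g\|_{L^p}^{p^*}\sum_k 2^{k\cdot 0}$, i.e.\ the exponents cancel \emph{exactly} and the sum diverges logarithmically. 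The inequality can still be closed, but only by running a genuine discrete Hardy/Poincar\'e inequality in $\ell^{p^*}$ with the geometrically growing weights $\nu_k$ (keeping the increments inside the $\ell^{p^*}$ norm rather than estimating each $|u_{B_{k+1}}|$ by a supremum), which is precisely the role of the discrete $L^{p^*}$-Poincar\'e inequality on the weighted graph in Step 2 of the paper --- there formulated on the graph of annular components (not just a half-line) to account for branching. So your overall architecture (local Sobolev from doubling plus Rajala's Poincar\'e via Hajlasz--Koskela, then a dyadic decomposition with a discrete Hardy-type inequality using $p<\eta$) is in the right spirit, but as written the oscillation part fails outright and the Hardy part is borderline; both repairs lead you back to the annular good covering and the patching theorems of the paper.
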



Theorem \ref{th:weightedSobolev} extends a result by Minerbe stated for $p=2$ on $n$-dimensional Riemannian manifolds with non-negative Ricci curvature \cite[Th.~0.1]{Minerbe}. The motivation there was that the classical $L^2$-Sobolev inequality does not hold on those manifolds which satisfy \eqref{eq:growthcondition} with $\eta<N=n$, see \cite[Prop.~2.21]{Minerbe}. This phenomenon also holds on some metric measure spaces including Finsler manifolds, see the forthcoming \cite{T} for related results.

Our proof is an adaptation of Minerbe's proof to the setting of $\CD(0,N)$ spaces and is based upon ideas of Grigor'yan and Saloff-Coste introduced in the smooth category \cite{Grigor'yanSaloff-Coste} which extend easily to the setting of metric measure spaces. More precisely, we apply an abstract process (Theorem \ref{th:patching}) which permits to patch local inequalities into a global one by means of an appropriate discrete Poincaré inequality. In the broader context of metric measure spaces with a global doubling condition, a local Poincaré inequality, and a reverse doubling condition weaker than \eqref{eq:growthcondition}, this method provides ``adimensional'' weighted Sobolev inequalities, as explained in the recent work \cite{Tewodrose2}.

After that, we follow a classical approach (see e.g.~\cite{BakryCoulhonLedouxSaloff-Coste}) which was neither considered in \cite{Minerbe} nor in the subsequent related work \cite{Hein} to deduce a weighted Nash inequality (Theorem \ref{th:weightedNash}) for $\CD(0,N)$ spaces satisfying the growth assumption \eqref{eq:growthcondition}, provided $\eta >2$. Let us mention that in the context of non-reversible Finsler manifolds, Ohta put forward an unweighted Nash inequality \cite{Ohta17} and that Bakry, Bolley, Gentil and Maheux introduced weighted Nash inequalities in the study of possibly non-ultracontractive Markov semigroups \cite{BakryBolleyGentilMaheux}, but these inequalities seem presently unrelated to our.

We conclude this note with a natural consequence in the setting of $\RCD(0,N)$ spaces satisfying a uniform local Ahlfors regularity property, namely a uniform bound for the weighted heat kernel associated with a suitable modification of the Cheeger energy. To the best knowledge of the author, this is the first appearance of this weighted heat kernel whose properties would require a deeper investigation.

The paper is organized as follows. In Section 2, we introduce the tools of non-smooth analysis that we shall use throughout the article.~We also define the $\CD(0,N)$ and $\RCD(0,N)$ conditions, and present the aforementioned patching process. Section 3 is devoted to the proof of Theorem \ref{th:weightedSobolev}. Section 4 deals with the weighted Nash inequality and the uniform bound on the weighted heat kernel we mentioned earlier. The final Section 5 provides a non-trivial non-smooth space to which our main theorem applies.

\section*{Acknowledgments}
I warmly thank T.~Coulhon who gave the initial impetus to this work. I am also greatly indebted towards L.~Ambrosio for many relevant remarks at different stages of the work. Finally, I would like to thank V.~Minerbe for useful comments, G.~Carron and N.~Gigli for helpful final conversations, and the anonymous referees for precious suggestions.

\section{Preliminaries}

Unless otherwise mentioned, in the whole article $(X,\dist,\meas)$ denotes a triple where $(X,\dist)$ is a proper, complete and separable metric space and $\meas$ is a Borel measure, positive and finite on balls with finite and non-zero radius, such that $\supp(\meas)=X$. We use the standard notations for function spaces: $C(X)$ for the space of $\dist$-continuous functions, $\Lip(X)$ for the space of $\dist$-Lipschitz functions and $L^p(X,\meas)$ (respectively $L^p_{loc}(X,\meas)$) for the space of $p$-integrable (respectively locally $p$-integrable) functions, for any $1 \le p \le +\infty$. If $U$ is an open subset of $X$, we denote by $C_c(U)$ the space of continuous functions on $X$ compactly supported in $U$. We also write $L^0(X,\meas)$ (respectively $L^0_+(X,\meas)$) for the space of $\meas$-measurable (respectively non-negative $\meas$-measurable) functions. If $A$ is a subset of $X$, we denote by $\overline{A}$ its closure. For any $x \in X$ and $r>0$, we write $S_r(x)$ for $\overline{B_r(x)}\backslash B_r(x)$. For any $\lambda >0$, if $B$ denotes a ball of radius $r>0$, we write $\lambda B$ for the ball with same center as $B$ and of radius $\lambda r$.  If $A$ is a bounded Borel subset of $X$, then for any locally integrable function $u:X \to \R$, we write $u_A$ or $\fint_A u \di \meas$ for the mean value $\frac{1}{\meas(A)} \int_A u \di \meas$, and $\langle u \rangle_A$ for the mean value $\frac{1}{\mu(A)}\int_A u \di \mu$, where $\mu$ is as in Theorem \ref{th:weightedSobolev}.

Several constants appear in this work. For better readability, if a constant $C$ depends only on parameters $a_1, a_2, \cdots$ we always write $C=C(a_1,a_2,\cdots)$ for its first occurrence, and then write more simply $C$ if there is no ambiguity.\\

\textbf{Non-smooth analysis.}

\noindent Let us recall that a continuous function $\gamma : [0,L] \to X$ is called a rectifiable curve if its length
\[
L(\gamma):=\sup \left\{ \sum_{i=1}^n \dist(\gamma(x_i),\gamma(x_{i-1})) \, \, : \, \, 0 = x_0 < \cdots < x_n = L, \, \,  n \in \N\backslash\{0\} \right\}
\]
is finite. If $\gamma : [0,L] \to X$ is rectifiable then so is its restriction $\gamma\restr_{[t,s]}$ to any subinterval $[t,s]$ of $[0,L]$; moreover, there exists a continuous function $\bar{\gamma} : [0,L(\gamma)] \to X$, called arc-length parametrization of $\gamma$, such that $L(\bar{\gamma}\restr_{[t,s]})=|t-s|$ for all $0 \le t \le s \le L(\gamma)$, and a non-decreasing continuous map $\varphi : [0,L] \to [0,L(\gamma)]$, such that $\gamma = \bar{\gamma} \circ \varphi$ (see e.g. \cite[Prop. 2.5.9]{BuragoBuragoIvanov}). When $\gamma=\bar{\gamma}$, we say that $\gamma$ is parametrized by arc-length.

In the context of metric analysis, a weak notion of norm of the gradient of a function is available and due to Heinonen and Koskela \cite{HeinonenKoskela}. 

\begin{definition}[Upper gradients]\label{def:uppergradient}
Let $u : X \to [-\infty,+\infty]$ be an extended real-valued function. A Borel function $g : X \to [0,+\infty]$ is called upper gradient of $u$ if for any rectifiable curve $\gamma:[0,L] \rightarrow X$ parametrized by arc-length,
\[
|u(\gamma(L))-u(\gamma(0))| \le \int_0^L g(\gamma(s)) \di s.
\]
\end{definition}

Building on this, one can introduce the so-called Cheeger energies and the associated Sobolev spaces $H^{1,p}(X,\dist,\meas)$, where $p\in[1,+\infty)$, in the following way:




\begin{definition}[Cheeger energies and Sobolev spaces]\label{def:Sobolev}
Let $1 \le p < +\infty$. The $p$-Cheeger energy of a function $u \in L^p (X,\meas)$ is set as
\[
\mathrm{Ch}_p(u):=\inf \liminf\limits_{i \to \infty} \|g_{i}\|_{L^{p}}^p.
\]
where the infimum is taken over all the sequences $(u_i)_i \subset L^p(X,\meas)$ and $(g_{i})_i \subset L^0_+(X,\meas)$ such that $g_{i}$ is an upper gradient of $u_{i}$ and $\| u_i - u \|_{L^p} \to 0$. The Sobolev space $H^{1,p}(X,\dist,\meas)$ is then defined as the closure of $\Lip(X) \cap L^p(X,\meas)$ with respect to the norm 
\[
\|u\|_{H^{1,p}} := \left( \|u\|_{L^{p}}^p + \mathrm{Ch}_p(u) \right)^{1/p}.
\]
\end{definition}

\begin{remark}
Following a classical convention, we call Cheeger energy the $2$-Cheeger energy and write $\Ch$ instead of $\Ch_2$.
\end{remark}

The above relaxation process can be performed with slopes of bounded Lipschitz functions instead of upper gradients, see Lemma \ref{lem:standardlemma}. Recall that the slope of a Lipschitz function $f$ is defined as
\[
|\nabla f|(x) := 
\begin{cases}
\limsup\limits_{y \to x} \frac{|f(x)-f(y)|}{d(x,y)} & \text{if $x \in X$ is not isolated},\\
\quad \quad \quad \quad 0 & \text{otherwise},
\end{cases}
\]
and that it satisfies the chain rule, namely $|\nabla (f g)| \le f |\nabla g| + g |\nabla f|$ for any $f, g \in \Lip(X)$.

Let us recall that $(X,\dist,\meas)$ is called doubling if there exists $C_D\ge 1$ such that
\begin{equation}\label{doubling}
V(x,2r) \le C_{D} V(x,r) \quad \quad \forall x \in X, \, \forall r>0,
\end{equation}
and that it satisfies a uniform weak local $L^p$-Poincaré inequality, where $p \in [1,+\infty)$, if there exists $\lambda>1$ and $C_P>0$ such that
\begin{align}\label{local Poincaré}
\int_{B} |u-u_{B}|^{p} \di \meas \le C_{P} r^{p} \int_{\lambda B} g^{p} \di \meas \qquad
\end{align}
holds for any ball $B$ of arbitrary radius $r>0$, any $u \in L^1_{loc}(X,\meas)$ and any upper gradient $g \in L^p(X,\meas)$ of $u$. If \eqref{local Poincaré} holds with $\lambda=1$, we say that a uniform \textit{strong} local $L^p$-Poincaré inequality holds.


The next notion serves to turn weak inequalities into strong inequalities, see e.g.~\cite[Sect.~9]{HajlaszKoskela}.

\begin{definition}[John domain]\label{def:Johndomain}
A bounded open set $\Omega\subset X$ is called a John domain if there exists $x_{o} \in \Omega$ and $C_J>0$ such that for every $x \in \Omega$, there exists a Lipschitz curve $\gamma:[0,L] \rightarrow \Omega$ parametrized by arc-length such that $\gamma(0)=x$, $\gamma(L)=x_{o}$ and $t^{-1} \dist(\gamma(t), X \backslash \Omega) \ge C_J$ for any $t \in [0,L]$.
\end{definition}


Finally let us introduce a technical property taken from \cite{HajlaszKoskela}. For any $v \in L^0(X,\meas)$ and $0<t_1<t_2<+\infty$, we denote by $v_{t_{1}}^{t_{2}}$ the truncated function $\min(\max(0,v-t_{1}),t_{2}-t_{1})+t_1$. We write $\chi_A$ for the characteristic function of a set $A\subset X$.

\begin{definition}[Truncation property]
We say that a pair of $\meas$-measurable functions $(u,g)$ such that for some $p\in[1,+\infty)$, $C_P>0$ and $\lambda>1$, the inequality \eqref{local Poincaré} holds for any ball $B$ of arbitrary radius $r>0$, has the truncation property if for any $0<t_1<t_2<+\infty$, $b \in \R$ and $\eps \in \{-1,1\}$, there exists $C>0$ such that \eqref{local Poincaré} holds for any ball $B$ of arbitrary radius $r>0$ with $u$, $g$ and $C_P$ replaced by $(\eps(u-b))_{t_1}^{t_2}$, $g \chi_{\{t_1<u<t_2\}}$ and $C$ respectively.
\end{definition}

The next proposition is a particular case of \cite[Th.~10.3]{HajlaszKoskela}.

\begin{prop}\label{truncation}
If $(X, \dist,\meas)$ satisfies a uniform weak local $L^1$-Poincaré inequality, any pair $(u,g)$ where $u \in C(X)$ and $g\in L^1_{loc}(X,\meas)$ is an upper gradient of $u$ has the truncation property.\\
\end{prop}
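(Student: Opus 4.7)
My plan is to reduce the truncation property to a single structural claim: for every admissible $t_1 < t_2$, $b \in \R$ and $\eps \in \{-1,1\}$, the pair $(v, \tilde g)$ with $v := (\eps(u-b))_{t_1}^{t_2}$ and $\tilde g := g\chi_{\{t_1 < \eps(u-b) < t_2\}}$ consists of a continuous function and an upper gradient of it. Granted this, the uniform weak local $L^1$-Poincaré inequality assumed on $(X,\dist,\meas)$ applies verbatim to $(v,\tilde g)$ with the same constants $C_P$ and $\lambda$, which gives the truncation property with $C = C_P$, independent of the parameters.

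Continuity of $v$ is immediate, as $v = \tau \circ \eps(u-b)$ where $\tau$ is the piecewise linear truncation which equals $t_1$ on $(-\infty,t_1]$, the identity on $[t_1,t_2]$, and $t_2$ on $[t_2,+\infty)$. To establish the upper gradient claim I would fix an arc-length parametrized rectifiable curve $\gamma:[0,L]\to X$ and inspect the open subset $I := \gamma^{-1}(\{t_1 < \eps(u-b) < t_2\})$ of $[0,L]$, which decomposes into a (possibly empty or countable) disjoint union of open intervals $(a_i, c_i)$. On each such interval, $v$ coincides with $\eps(u-b)$ by definition of the truncation, and the equality persists at $a_i, c_i$ by continuity. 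On the complementary closed set, $\eps(u-b)\circ\gamma$ takes values in the disjoint closed branches $(-\infty, t_1]$ and $[t_2,+\infty)$; a connectedness argument shows that each connected component of $[0,L]\setminus I$ lies entirely in one branch, forcing $v\circ\gamma$ to be constant on it. Applying the upper gradient property of $g$ along $\gamma|_{[a_i+\delta, c_i-\delta]}$ and sending $\delta\to 0^+$ yields
\[
|v(\gamma(c_i)) - v(\gamma(a_i))| = |u(\gamma(c_i)) - u(\gamma(a_i))| \le \int_{a_i}^{c_i} g(\gamma(s))\,ds = \int_{a_i}^{c_i} \tilde g(\gamma(s))\,ds,
\]
the last equality holding because the endpoints $a_i, c_i$ contribute zero Lebesgue measure. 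Telescoping over the components of $I$ and using that $v\circ\gamma$ is constant between consecutive ones delivers $|v(\gamma(L)) - v(\gamma(0))| \le \int_0^L \tilde g(\gamma(s))\,ds$, as required.

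The main technical obstacle, though minor, is the bookkeeping at the endpoints $a_i, c_i$: there $\eps(u-b)\circ\gamma$ meets the truncation levels exactly and the characteristic function defining $\tilde g$ drops to zero, so the naive pointwise upper gradient inequality must be recovered by a limit. This is handled by the approximation $\delta\to 0^+$ above, whose validity rests squarely on the assumed continuity of $u$, which is why this step uses the hypothesis $u \in C(X)$. Once $(v,\tilde g)$ is identified as a function-upper-gradient pair, substitution into the ambient weak local $L^1$-Poincaré inequality closes the proof with no further analysis.
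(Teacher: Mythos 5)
You should know that the paper does not prove this proposition at all: it simply records it as a special case of Hajlasz--Koskela \cite[Th.~10.3]{HajlaszKoskela}. Your argument is therefore a genuinely different, self-contained route, and its overall strategy is the right one (indeed it is the standard mechanism behind the cited result): show that $v:=(\eps(u-b))_{t_1}^{t_2}$ is continuous and that $\tilde g:=g\chi_{\{t_1<\eps(u-b)<t_2\}}$ is an upper gradient of $v$, and then feed the pair $(v,\tilde g)$ into the assumed weak local $L^1$-Poincar\'e inequality, which yields the truncation property with the same constants. The continuity of $v$, the identification $v=\eps(u-b)$ on the components of $I=\gamma^{-1}(\{t_1<\eps(u-b)<t_2\})$, the constancy of $v\circ\gamma$ on the components of the complement, and the $\delta\to 0^+$ limit at the endpoints are all fine and do use $u\in C(X)$ exactly where you say they do.

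The one step that is not justified as written is the final ``telescoping''. When $I$ has infinitely many components there are no ``consecutive'' ones (components can accumulate), and the inequality $|v(\gamma(L))-v(\gamma(0))|\le\sum_i|v(\gamma(c_i))-v(\gamma(a_i))|$ is \emph{not} a consequence of continuity plus constancy on the components of $[0,L]\setminus I$ alone: take for $[0,L]\setminus I$ the Cantor set and for $v\circ\gamma$ the Cantor staircase; it is constant on every component of the closed set (singletons) and on every component $(a_i,c_i)$ of $I$, so the right-hand side vanishes while the left-hand side equals $1$. What rescues your argument is the extra structure you have but do not invoke: on $[0,L]\setminus I$ the function $v\circ\gamma$ takes only the two values $t_1,t_2$, while on $I$ it lies strictly between them. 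A clean repair: prove the bound on every subsegment $[s,t]\subset[0,L]$; assuming $w:=v\circ\gamma$ satisfies $w(s)<w(t)$, fix levels $w(s)<A'<B'<w(t)$, set $\sigma:=\sup\{r\in[s,t]:w(r)\le A'\}$ and $\tau:=\inf\{r\in[\sigma,t]:w(r)\ge B'\}$. By continuity $A'<w<B'$ on $(\sigma,\tau)$, so $(\sigma,\tau)$ is contained in a single component of $I$, where $w=\eps(u-b)\circ\gamma$ and $\tilde g\circ\gamma=g\circ\gamma$; your $\delta$-argument on $[\sigma,\tau]$ then gives $B'-A'\le\int_{\sigma}^{\tau}\tilde g(\gamma(r))\di r\le\int_{s}^{t}\tilde g(\gamma(r))\di r$, and letting $A'\downarrow w(s)$, $B'\uparrow w(t)$ yields $|w(t)-w(s)|\le\int_s^t\tilde g(\gamma(r))\di r$. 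With this substitute for the telescoping, your proof is complete.
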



\textbf{The $\CD(0,N)$ and $\RCD(0,N)$ conditions.}

\noindent Let us give the definition of the curvature-dimension conditions $\CD(0,N)$ and $\RCD(0,N)$. For the general condition $\CD(K,N)$ with $K \in \R$, we refer to \cite[Chap. 29 \& 30]{Villani}.

Recall that a curve $\gamma : [0,1] \rightarrow X$ is called a geodesic if $\dist(\gamma(s),\gamma(t))=|t-s| \dist(\gamma(0),\gamma(1))$ for any $s,t \in [0,1]$. The space $(X,\dist)$ is called geodesic if for any couple of points $(x_{0},x_{1}) \in X^2$ there exists a geodesic $\gamma$ such that $\gamma(0)=x_{0}$ and $\gamma(1)=x_{1}$. We denote by $\cP(X)$ the set of probability measures on $X$ and by $\cP_{2}(X)$ the set of probability measures $\mu$ on $X$ with finite second moment, i.e. such that there exists $x_o \in X$ for which $\int_X \dist^2(x_o,x) \di \mu(x) < + \infty$. The Wasserstein distance between two measures $\mu_{0},\mu_{1} \in \mathcal{P}_{2}(X)$ is by definition
\[
W_{2}(\mu_{0},\mu_{1}) := \inf \left( \int_{X \times X} \dist(x_{0},x_{1})^2 \di \pi(x_{0},x_{1})  \right)^{1/2}
\]
where the infimum is taken among all the probability measures $\pi$ on $X\times X$ with first marginal equal to $\mu_{0}$ and second marginal equal to $\mu_{1}$. A standard result of optimal transport theory states that if the space $(X,\dist)$ is geodesic, then the metric space $(\mathcal{P}_{2},W_{2})$ is geodesic too. Let us introduce the Rényi entropies.

\begin{definition}[Rényi entropies]
Given $N \in (1,+\infty)$, the $N$-Rényi entropy relative to $\meas$, denoted by $S_{N}(\cdot | \meas)$, is defined as follows:
\[
S_{N}(\mu | \meas) := - \int_{X} \rho^{1-\frac{1}{N}} \di \meas \qquad \forall \mu \in \cP(X),
\]
 where $\mu = \rho \meas + \mu^{sing}$ is the Lebesgue decomposition of $\mu$ with respect to $\meas$.
\end{definition}


We are now in a position to introduce the $\CD(0,N)$ condition, which could be summarized as weak geodesical convexity of all the $N'$-Rényi entropies with $N' \ge N$.

\begin{definition}[$\CD(0,N)$ condition]\label{def:CD(0,N)}
Given $N \in (1,+\infty)$, a complete, separable, geodesic metric measure space $(X,\dist,\meas)$ satisfies the $CD(0,N)$ condition if for any $N' \ge N$, the $N'$-Rényi entropy is weakly geodesically convex, meaning that for any couple of measures $(\mu_{0},\mu_{1}) \in \mathcal{P}_{2}(X)^2$, there exists a $W_{2}$-geodesic $(\mu_{t})_{t \in [0,1]}$ between $\mu_{0}$ and $\mu_{1}$ such that for any $t \in [0,1]$,
\[
S_{N}(\mu_t | \meas) \le (1-t) S_{N}(\mu_0 | \meas) + t S_{N}(\mu_1 | \meas).
\]
Any space satisfying the $\CD(0,N)$ condition is called a $\CD(0,N)$ space.
\end{definition}

The Bishop-Gromov theorem holds on $\CD(0,N)$ spaces \cite[Th.~30.11]{Villani}, and as a direct consequence, the doubling condition \eqref{doubling} holds too, with $C_D=2^N$. Moreover, Rajala proved the following uniform weak local $L^1$-Poincaré inequality \cite[Th.~1.1]{Rajala}.

\begin{prop}\label{prop:Rajala}
Assume that $(X,\dist,\meas)$ is a $\CD(0,N)$ space. Then for any function $u \in C(X)$ and any upper gradient $g \in L^{1}_{loc}(X,\meas)$ of $u$, for any ball $B \subset X$ of arbitrary radius $r>0$,
\[
\int_{B}|u-u_{B}|\di \meas \le 4 r \int_{2B} g \di \meas.
\] 
\end{prop}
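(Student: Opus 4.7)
The plan is to follow the by-now standard scheme linking optimal transport with Poincaré inequalities, tailored to the $\CD(0,N)$ framework. Fix a ball $B = B_r(x_0)$ and note that any geodesic joining two points of $B$ stays inside $2B$. Starting from the double-integral bound
\[
\int_B |u-u_B|\,\di\meas \le \frac{1}{\meas(B)}\int_{B\times B}|u(x)-u(y)|\,\di\meas(x)\di\meas(y),
\]
I would, for each pair $(x,y)\in B\times B$, apply the upper gradient property to a constant-speed geodesic $\gamma_{xy}\colon[0,1]\to X$ (whose image lies in $2B$) to get
\[
|u(x)-u(y)| \le \dist(x,y)\int_0^1 g(\gamma_{xy}(t))\,\di t \le 2r\int_0^1 g(\gamma_{xy}(t))\,\di t.
\]
Using the identity $\gamma_{yx}(t)=\gamma_{xy}(1-t)$ under interchange of $x$ and $y$, the problem reduces to estimating the truncated quantity $\int_{B\times B}\int_0^{1/2} g(\gamma_{xy}(t))\,\di t\,\di\meas(x)\di\meas(y)$.

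The core step is then a segment-type inequality of the form
\[
\int_{B\times B}\int_0^{1/2} g(\gamma_{xy}(t))\,\di t\,\di\meas(x)\di\meas(y) \le C(N)\,\meas(B)\int_{2B} g\,\di\meas,
\]
which I would extract from the weak geodesic convexity of $S_{N'}$ for $N'\ge N$. Fixing $y\in B$ and setting $\mu_0 := \meas(B)^{-1}\meas\res B$ and $\mu_1^{y,\vareps} := \meas(B_\vareps(y))^{-1}\meas\res B_\vareps(y)$, Definition \ref{def:CD(0,N)} provides a $W_2$-geodesic $(\mu_t^{y,\vareps})_{t\in[0,1]}$ along which each $S_{N'}(\cdot|\meas)$ is weakly convex. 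A Jensen-type manipulation of these inequalities should upgrade the entropy bound into a pointwise density estimate of the shape $\rho_t^{y,\vareps}\le C(N)(1-t)^{-N}\meas(B)^{-1}$, valid on $2B$ and uniform in $\vareps$. Letting $\vareps\to 0$, identifying $\mu_t^{y}$ with the law of $\gamma_{xy}(t)$ under an appropriate coupling with $x$ uniformly distributed in $B$, then integrating $\int g\,\di\mu_t^{y}\le C(N)2^N\meas(B)^{-1}\int_{2B} g\,\di\meas$ over $t\in[0,1/2]$ and $y\in B$ gives the segment inequality. Combining the three steps and using Bishop--Gromov ($\meas(2B)\le 2^N\meas(B)$) to streamline constants should yield the factor $4r$.

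The main obstacle is precisely the passage from the weak convexity of $S_{N'}$ to a pointwise density bound on $\rho_t^{y,\vareps}$. In an essentially non-branching space, such a bound follows from Monge--Mather shortening and a Jacobian analysis along the unique $W_2$-geodesic; but in the general $\CD(0,N)$ setting branching may occur, the optimal plan is not unique, and one cannot directly differentiate geodesic interpolations. The trick must therefore be to exhibit, among all the geodesics granted by $\CD(0,N)$, one whose density concentrates no worse than the claimed bound. This is the technical heart of Rajala's contribution \cite{Rajala} and relies crucially on the full quantification over $N'\ge N$ in Definition \ref{def:CD(0,N)}: the extra room afforded by letting $N'$ vary is what allows the pointwise estimate to be propagated along a privileged interpolation. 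Once that density bound is available, the remaining integrations and the tracking of the numerical constant $4$ are routine.
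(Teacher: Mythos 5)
You should first note that the paper does not prove this proposition at all: it is quoted verbatim from Rajala \cite[Th.~1.1]{Rajala}, so your blind attempt is really being measured against Rajala's argument. Your outline does identify the right strategy (reduce to the double integral over $B\times B$, apply the upper-gradient inequality along geodesics, which indeed stay in $2B$, and conclude via a segment/democratic-type estimate coming from displacement convexity). But there is a genuine gap exactly at the step you call the core one. The $\CD(0,N)$ condition of Definition \ref{def:CD(0,N)} only asserts weak convexity of $S_{N'}(\cdot|\meas)$ along \emph{some} $W_2$-geodesic; this is an integrated inequality, and no ``Jensen-type manipulation'' of it produces the pointwise bound $\rho_t^{y,\eps}\le C(N)(1-t)^{-N}\meas(B)^{-1}$ on the interpolant densities. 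In a possibly branching space, with non-unique optimal plans, extracting an $L^\infty$ bound on intermediate densities from the entropy inequalities is precisely the content of Rajala's construction of good geodesics (the main technical work of \cite{Rajala} and its companion paper on interpolated measures with bounded density), and your proposal cites it rather than proves it. One also needs the lifting of the chosen $W_2$-geodesic to a dynamical optimal plan, with a measurable and swap-symmetric selection of the curves $\gamma_{xy}$, so that the law of $\gamma_{xy}(t)$ really is the measure to which the density bound applies; this is glossed over in the ``appropriate coupling'' step.

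A second, more quantitative problem: even granting the density bound you postulate, your bookkeeping does not give the stated constant. Using $|u(x)-u(y)|\le 2r\int_0^1 g(\gamma_{xy}(t))\di t$, symmetrizing to $t\in[0,1/2]$, and bounding $(1-t)^{-N}\le 2^N$ there, one gets
\[
\int_B |u-u_B|\di\meas \;\le\; \frac{4r}{\meas(B)}\int_B\Bigl(\int_B\int_0^{1/2} g(\gamma_{xy}(t))\di t\,\di\meas(x)\Bigr)\di\meas(y)\;\le\; 2^{N+1}\,r\int_{2B} g\di\meas,
\]
i.e.\ a dimension-dependent constant rather than $4r$; Bishop--Gromov only converts between averaged and non-averaged forms and cannot remove the $2^N$. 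The dimension-free $4$ is one of the points of Rajala's finer argument, so your route would at best prove the proposition with $4$ replaced by $C(N)$ (which, to be fair, would suffice for every subsequent use in this paper, since all later constants depend on $N$ anyway), but it does not establish the statement as written.
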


The $\CD(0,N)$ condition does not distinguish between Riemannian-like and non-Riemannian-like structures: for instance, $\R^n$ equipped with the distance induced by the $L^\infty$-norm and the Lebesgue measure satisfies the $\CD(0,N)$ condition (see the last theorem in \cite{Villani}), though it is not a Riemannian structure because the $L^\infty$-norm is not induced by any scalar product. To focus on Riemannian-like structures, Ambrosio, Gigli and Savaré added to the theory the notion of infinitesimal Hilbertianity, leading to the so-called $\RCD$ condition, $R$ standing for \textit{Riemannian} \cite{AmbrosioGigliSavare14}.

\begin{definition}[$\RCD(0,N)$ condition]\label{def:RCD(0,N)}
$(X,\dist,\meas)$ is called infinitesimally Hilbertian if $\Ch$ is a quadratic form. If in addition $(X,\dist,\meas)$ is a $\CD(0,N)$ space, it is said to satisfy the $\RCD(0,N)$ condition, or more simply it is called a $\RCD(0,N)$ space.
\end{definition}

Let us provide some standard facts taken from \cite{AmbrosioGigliSavare14, Gigli}.
First, note that $(X,\dist,\meas)$ is infinitesimally Hilbertian if and only if $H^{1,2}(X,\dist,\meas)$ is a Hilbert space, whence the terminology. Moreover, for infinitesimally Hilbertian spaces, a suitable diagonal argument  justifies for any $f \in H^{1,2}(X,\dist,\meas)$ the existence of a function $|\nabla f|_{*} \in L^2(X,\meas)$, called \textit{minimal relaxed slope} or \textit{minimal generalized upper gradient} of $f$, which gives integral representation of $\Ch$, meaning:
$$
\Ch(f)=\int_X |\nabla f|_{*}^2 \di \meas \qquad \forall f \in H^{1,2}(X,\dist,\meas).
$$
The minimal relaxed slope is a local object, meaning that $|\nabla f|_* = |\nabla g|_*$ $\meas$-a.e.~on $\{f = g\}$ for any $f, g \in H^{1,2}(X,\dist,\meas)$, and it satisfies the chain rule, namely $|\nabla (fg)|_* \le f |\nabla g|_* + g |\nabla f|_*$ $\meas$-a.e.~on $X$ for all $f,g \in H^{1,2}(X,\dist,\meas)$.
In addition, the function 
$$
\langle\nabla f_1,\nabla f_2\rangle:=\lim_{\epsilon\to 0}\frac{|\nabla (f_1+\epsilon f_2)|_*^2-|\nabla f_1|_*^2}{2\epsilon}
$$
provides a symmetric bilinear form on $H^{1,2}(X,\dist,\meas)\times H^{1,2}(X,\dist,\meas)$ with values in $L^1(X,\meas)$, and
$$
\Ch (f_1,f_2) := \int_X \langle \nabla f_1, \nabla f_2 \rangle \di \meas \qquad \forall f_1, f_2 \in H^{1,2}(X,\dist,\meas),
$$
defines a strongly local, regular and symmetric Dirichlet form. Finally, the infinitesimally Hilbertian condition allows to apply the general theory of gradient flows on Hilbert spaces, ensuring the existence of the $L^2$-gradient flow $(h_{t})_{t\ge 0}$ of the convex and lower semicontinuous functional $\Ch$, called \textit{heat flow} of $(X,\dist,\meas)$. This heat flow is a linear, continuous, self-adjoint and Markovian contraction semigroup in $L^2(X,\meas)$. The terminology `heat flow' comes from the characterization of $(h_{t})_{t \ge 0}$ as the only semigroup of operators such that $t\mapsto h_t f$ is locally absolutely continuous in $(0,+\infty)$ with values in $L^2(X,\meas)$ and
\[
\frac{\di}{\di t}h_t f=\Delta h_t f\quad\text{for $\cL^1$-a.e. $t\in (0,+\infty)$}
\]
holds for any $f\in L^2(X,\meas)$, the Laplace operator $\Delta$ being defined in this context by:
\[
f\in D(\Delta)\,\,\,\Longleftrightarrow\,\,\,
\exists h:=\Delta f\in L^2(X,\meas)\,\,\text{s.t. } \Ch(f,g) = - \int_X hg\, \di \meas \,\,\,\forall g\in H^{1,2}(X,\dist,\meas).\\
\]


\textbf{Patching process}

\noindent Let us present now the patching process \cite{Grigor'yanSaloff-Coste, Minerbe} that we shall apply to get Theorem \ref{th:weightedSobolev}. In the whole paragraph, $(X,\dist)$ is a metric space equipped with two Borel measures $\meas_1$ and $\meas_2$ both finite and nonzero on balls with finite and nonzero radius and such that $\supp(\meas_1) = \supp(\meas_2)=X$. For any bounded Borel set $A \subset X$ and any locally $\meas_2$-integrable function $u:X \to \R$, we denote by $\{u\}_A$ the mean value $\frac{1}{\meas_2(A)} \int_A u \di \meas_2$. For any given set $S$, we denote by $Card(S)$ its cardinality. 

\begin{definition}[Good covering]\label{def:goodcovering}
 Let $A \subset A^{\#} \subset X$ be two Borel sets. A countable family $(U_{i},U_{i}^{*},U_{i}^{\#})_{i \in I}$ of triples of Borel subsets of $X$ with finite $\meas_j$-measure for any $j\in\{1,2\}$ is called a good covering of $(A,A^{\#})$ with respect to $(\meas_1,\meas_2)$ if:
\begin{enumerate}
\item for every $i \in I$, $U_{i} \subset U_{i}^{*} \subset U_{i}^{\#}$;
\item there exists a Borel set $E \subset A$ such that $A \backslash E \subset \bigcup_{i} U_{i} \subset \bigcup_{i} U_{i}^{\#} \subset A^{\#}$ and $\meas_1(E) = \meas_2 (E) = 0$;

\item\label{3} 
there exists $Q_{1} > 0$ such that $Card(\{ i \in I : U_{i_{0}}^{\#} \cap U_{i}^{\#} \neq \emptyset \}) \le Q_{1}$ for any $i_{0} \in I$;
\item\label{4} 
for any $(i,j) \in I \times I$ such that $\overline{U_{i}} \cap \overline{U_{j}} \neq \emptyset$, there exists $k(i,j) \in I$ such that $U_{i} \cup U_{j} \subset U_{k(i,j)}^{*};$
\item\label{5} 
there exists $Q_{2} > 0$ such that for any $(i,j) \in I \times I$ satisfying $\overline{U_{i}} \cap \overline{U_{j}} \neq \emptyset$,
$$
\meas_2(U_{k(i,j)}^{*}) \le Q_{2} \min (\meas_2(U_{i}),\meas_2(U_{j})).
$$
\end{enumerate}
\end{definition}

When $A=A^{\#}=X$, we say that $(U_{i},U_{i}^{*},U_{i}^{\#})_{i \in I}$ is a good covering of $(X,\dist)$ with respect to $(\meas_1,\meas_2)$.

For the sake of clarity, we call condition \ref{3}.~the \textit{overlapping condition}, condition \ref{4}.~the \textit{embracing condition} and condition \ref{5}.~the \textit{measure control condition} of the good covering. Note that in \cite{Minerbe} the measure control condition was required also for $\meas_1$ though never used in the proofs.

From now on, we consider two numbers $p, q \in[1,+\infty)$ and two Borel sets $A\subset A^\# \subset X$. We assume that a good covering $(U_{i},U_{i}^{*},U_{i}^{\#})_{i \in I}$ of $(A,A^{\#})$ with respect to $(\meas_1,\meas_2)$ exists.

Let us explain how to define from $(U_{i},U_{i}^{*},U_{i}^{\#})_{i \in I}$ a canonical weighted graph $(\cV,\cE,\nu)$, where $\cV$ is the set of vertices of the graph, $\cE$ is the set of edges, and $\nu$ is a weight on the graph (i.e. a function $\nu:\cV \sqcup \cE \rightarrow \R$). We define $\cV$ by associating to each $U_{i}$ a vertex $i$ (informally, we put a point $i$ on each $U_i$). Then we set $\cE := \{ (i,j) \in \cV\times\cV : i \neq j \, \, \text{and} \, \, \ov{U_{i}} \cap \ov{U_{j}} \neq \emptyset \}$. Finally we weight the vertices of the graph by setting $\nu(i):=\meas_2(U_{i})$ for every $i \in \cV$  and the edges by setting $\nu(i,j):=\max(\nu(i),\nu(j))$ for every $(i,j) \in \cE$. 

The patching theorem (Theorem \ref{th:patching}) states that if some local inequalities are true on the pieces of the good covering and if a discrete inequality holds on the associated canonical weighted graph, then the local inequalities can be patched into a global one. Let us give the precise definitions.

\begin{definition}[Local continuous $L^{q,p}$-Sobolev-Neumann inequalities]\label{def:localSobolevNeumann}
We say that the good covering $(U_{i},U_{i}^{*},U_{i}^{\#})_{i \in I}$ satisfies local continuous $L^{q,p}$-Sobolev-Neumann inequalities if there exists a constant $S_{c} > 0$ such that for all $i \in I$,
\begin{equation}\label{eq:1}
\left(  \int_{U_{i}} |u - \{u\}_{U_i}|^{q} \di \meas_2 \right)^{\frac{1}{q}} \le S_{c} \left( \int_{U_{i}^{*}} g^p \di \meas_1 \right)^{\frac{1}{p}}
\end{equation}
for all $u \in L^1(U_i,\meas_2)$ and all upper gradients $g \in L^p(U_i^*,\meas_1)$, and
\begin{equation}\label{eq:2}
\left(  \int_{U_{i}^{*}} |u - \{u\}_{U_i^*}|^{q} \di \meas_2\right)^{\frac{1}{q}} \le S_{c}\left(  \int_{U_{i}^{\#}} g^p \di \meas_1 \right)^{\frac{1}{p}}
\end{equation}
for all $u \in L^1(U_i^*,\meas_2)$ and all upper gradients $g \in L^p(U_i^\#,\meas_1)$.
\end{definition}

\begin{definition}[Discrete $L^q$-Poincaré inequality]\label{def:discreteSobolevDirichlet}
We say that the weighted graph $(\cV,\cE,\nu)$ satisfies a discrete $L^q$-Poincaré inequality if there exists $S_{d}>0$ such that:
\begin{equation}\label{eq:discretePoincaré}
\left( \sum_{i \in \cV} |f(i)|^q \nu(i) \right)^{\frac{1}{q}} \le S_{d} \left( \sum_{\{ i,j \} \in \cE} |f(i) - f(j)|^q \nu(i,j) \right)^{\frac{1}{q}}  \qquad \forall f \in L^{q}(\cV,\nu).
\end{equation}
\end{definition}

\begin{remark}
Here we differ a bit from Minerbe's terminology. Indeed, in \cite{Minerbe}, the following discrete $L^q$ Sobolev-Dirichlet inequalities of order $k$ were introduced for any $k \in (1,+\infty]$ and any $q \in [1,k)$:
\[
\left( \sum_{i \in \cV} |f(i)|^{\frac{qk}{k-q}} \nu(i) \right)^{\frac{k-q}{qk}} \le S_{d} \left( \sum_{\{ i,j \} \in \cE} |f(i) - f(j)|^q \nu(i,j) \right)^{\frac{1}{q}}\qquad \forall f \in L^q(\cV,\nu).
\]
In the present paper we only need the case $k=+\infty$, in which we recover \eqref{eq:discretePoincaré}: here is why we have chosen the terminology ``Poincaré'' which seems, in our setting, more appropriate.
\end{remark}




We are now in a position to state the patching theorem.

\begin{theorem}[Patching theorem]\label{th:patching}
Let $(X,\dist)$ be a metric space equipped with two Borel measures $\meas_1$ and $\meas_2$, both finite and nonzero on balls with finite and nonzero radius, such that $\supp(\meas_1) = \supp(\meas_2)=X$. Let $A\subset A^\# \subset X$ be two Borel sets, and $p,q \in [1,+\infty)$ be such that $q\ge p$. Assume that $(A,A^{\#})$ admits a good covering $(U_i,U_i^*,U_i^\#)$ with respect to $(\meas_1,\meas_2)$ which satisfies the local $L^{q,p}$-Sobolev-Neumann inequalities \eqref{eq:1} and \eqref{eq:2} and whose associated weighted graph $(\cV,\cE,\nu)$ satisfies the discrete $L^q$-Poincaré inequality \eqref{eq:discretePoincaré}. Then there exists a constant $C=C(p,q,Q_1,Q_2,S_c,S_d) > 0$ such that for any function $u \in C_c(A^\#)$ and any upper gradient $g \in L^p(A^\#,\meas_1)$ of $u$, $$ \left( \int_A |u|^{q} \di \meas_2 \right)^{\frac{1}{q}} \le C \left( \int_{A^{\#}} g^{p} \di \meas_1  \right)^{\frac{1}{p}}.$$
\end{theorem}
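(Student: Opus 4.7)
The plan is to decompose $\int_A |u|^q\di\meas_2$ across the covering, split each local contribution into an oscillation around the local mean and a constant part given by that mean, then control the oscillations via the local Sobolev--Neumann inequalities \eqref{eq:1}--\eqref{eq:2} and the constants via the discrete Poincaré inequality \eqref{eq:discretePoincaré} on $(\cV,\cE,\nu)$. Since $A\setminus E\subset\bigcup_i U_i$ with $\meas_2(E)=0$, subadditivity gives $\int_A |u|^q\di\meas_2\le \sum_i\int_{U_i}|u|^q\di\meas_2$, and on each piece the elementary inequality $|a+b|^q\le 2^{q-1}(|a|^q+|b|^q)$ applied to $u=(u-\{u\}_{U_i})+\{u\}_{U_i}$ yields
\[
\int_{U_i}|u|^q\di\meas_2\le 2^{q-1}\int_{U_i}|u-\{u\}_{U_i}|^q\di\meas_2+2^{q-1}|f(i)|^q\nu(i),
\]
where $f(i):=\{u\}_{U_i}$, so that $|\{u\}_{U_i}|^q\meas_2(U_i)=|f(i)|^q\nu(i)$.

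For the oscillation sum, inequality \eqref{eq:1} gives $\int_{U_i}|u-\{u\}_{U_i}|^q\di\meas_2\le S_c^q(\int_{U_i^*}g^p\di\meas_1)^{q/p}$. Using $q/p\ge 1$ (so that $\sum_i a_i^{q/p}\le(\sum_i a_i)^{q/p}$ for $a_i\ge 0$) and the overlapping condition \ref{3} (which implies $\sum_i\chi_{U_i^*}\le Q_1$), I bound
\[
\sum_i\Big(\int_{U_i^*}g^p\di\meas_1\Big)^{q/p}\le\Big(\sum_i\int_{U_i^*}g^p\di\meas_1\Big)^{q/p}\le Q_1^{q/p}\Big(\int_{A^\#}g^p\di\meas_1\Big)^{q/p},
\]
which is of the desired form. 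The compactness of $\supp u\subset A^\#$ and continuity of $u$ ensure $\int_{A^\#}|u|^q\di\meas_2<\infty$, hence via Jensen and the same overlapping bound one obtains $\sum_i|f(i)|^q\nu(i)<\infty$, so $f\in L^q(\cV,\nu)$ and the discrete Poincaré inequality applies:
\[
\sum_i|f(i)|^q\nu(i)\le S_d^q\sum_{\{i,j\}\in\cE}|f(i)-f(j)|^q\nu(i,j).
\]

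It remains to estimate each edge contribution. For $\{i,j\}\in\cE$, let $k=k(i,j)$ from the embracing condition \ref{4}, so $U_i\cup U_j\subset U_k^*$. Jensen's inequality gives
\[
|f(i)-\{u\}_{U_k^*}|^q\le\frac{1}{\meas_2(U_i)}\int_{U_k^*}|u-\{u\}_{U_k^*}|^q\di\meas_2,
\]
and symmetrically for $j$. The measure control condition \ref{5}, combined with $U_i,U_j\subset U_k^*$, gives $\nu(i,j)=\max(\meas_2(U_i),\meas_2(U_j))\le\meas_2(U_k^*)\le Q_2\min(\meas_2(U_i),\meas_2(U_j))$, so both $\nu(i,j)/\meas_2(U_i)$ and $\nu(i,j)/\meas_2(U_j)$ are bounded by $Q_2$. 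Feeding the triangle inequality for $|f(i)-f(j)|^q$ and the local Sobolev--Neumann estimate \eqref{eq:2} on $U_k^*$ produces
\[
|f(i)-f(j)|^q\nu(i,j)\le 2^q Q_2 S_c^q\Big(\int_{U_k^\#}g^p\di\meas_1\Big)^{q/p}.
\]
To sum this over edges, observe that each fixed $k\in I$ can equal $k(i,j)$ for at most $Q_1^2$ pairs, because $U_i^\#\cap U_k^\#\ne\emptyset$ and $U_j^\#\cap U_k^\#\ne\emptyset$, each of which is an option restricted by \ref{3} to at most $Q_1$ values. Using again $q/p\ge 1$ and the overlapping bound, the sum is controlled by $Q_1^2\cdot Q_1^{q/p}(\int_{A^\#}g^p\di\meas_1)^{q/p}$. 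Combining with the oscillation estimate and taking the $q$-th root delivers the required inequality with $C=C(p,q,Q_1,Q_2,S_c,S_d)$.

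The genuinely delicate step is the edge estimate: it requires simultaneously invoking the embracing condition (to find a ``parent'' piece containing both $U_i$ and $U_j$), the measure control condition (to absorb $\nu(i,j)$ against the smallest denominator), and inequality \eqref{eq:2} (which is stated on the enlarged piece $U_k^*$, not $U_i^*$ — this is precisely why two different local Sobolev--Neumann inequalities are imposed in Definition \ref{def:localSobolevNeumann}). The other subtle book-keeping is the double use of the overlapping constant $Q_1$: once as a pointwise multiplicity bound for $\sum_i\chi_{U_i^\#}$, and once as a combinatorial bound on the number of pairs sharing a common $k(i,j)$.
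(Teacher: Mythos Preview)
Your proof is correct and follows essentially the same approach as the paper's own proof: decompose over the covering, split into oscillation plus mean, control oscillations by \eqref{eq:1} with the overlapping bound, control means by the discrete Poincar\'e inequality, and estimate edge differences via the embracing and measure control conditions together with \eqref{eq:2}. The only cosmetic difference is in the edge estimate: the paper passes through the double integral $\frac{1}{\meas_2(U_k^*)}\iint_{U_k^*\times U_k^*}|u(x)-u(y)|^q$, whereas you go directly via the triangle inequality through the intermediate mean $\{u\}_{U_k^*}$ and Jensen; both routes yield the identical bound $2^qQ_2\int_{U_k^*}|u-\{u\}_{U_k^*}|^q\di\meas_2$, and your final counting ($Q_1^2\cdot Q_1^{q/p}$) is a harmless variant of the paper's $Q_1^3$ bound.
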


Although the proof of Theorem \ref{th:patching} is a straightforward adaptation of \cite[Th.~1.8]{Minerbe}, we provide it for the reader's convenience.

\begin{proof}
Let us consider $u \in C_c(A^{\#})$. Then
$$\int_A |u|^q \di\meas_2 \le \sum_{i \in \cV} \int_{U_i} |u|^q \di \meas_2.$$ From convexity of the function $t \mapsto |t|^q$, we deduce $|u|^q \le 2^{q-1}(|u-\{u\}_{U_i}|^q+|\{u\}_{U_i}|^q)$ $\meas_2$-a.e.~on each $U_i$, and then
\begin{equation}\label{eq:11111}
\int_A |u|^q \di\meas_2 \le 2^{q-1} \sum_{i \in \cV} \int_{U_i} |u-\{u\}_{U_i}|^q \di \meas_2 + 2^{q-1} \sum_{i \in \cV} |\{u\}_{U_i}|^q \nu(i).
\end{equation}
From \eqref{eq:1} and the fact that $\sum_jx_j^{q/p} \le (\sum_j x_j)^{q/p}$ for any finite family of non-negative numbers $\{x_j\}$ (since $q\ge p$), we get
\begin{align}\label{eq:22222}
\sum_{i \in \cV} \int_{U_i} |u-\{u\}_{U_i}|^q \di \meas_2 & \le S_c^{q/p} \left( \sum_{i \in \cV} \int_{U_i^*} g^p \di \meas_1 \right)^{q/p} \nonumber \\
& \le S_c^{q/p} Q_1^{q/p} \left(\int_{A^{\#}}g^p \di \meas_1\right)^{q/p},
\end{align}
this last inequality being a direct consequence of the overlapping condition \ref{3}. Now the discrete $L^q$-Poincaré inequality \eqref{eq:discretePoincaré} implies
\begin{equation}\label{eq:33333}
\sum_{i \in \cV} |\{u\}_{U_i}|^q \nu(i) \le S_d \sum_{(i,j)\in \cE} |\{u\}_{U_i} - \{u\}_{U_j}|^q \nu(i,j).
\end{equation}
For any $(i,j) \in \cE$, a double application of Hölder's inequality yields to
$$
|\{u\}_{U_i} - \{u\}_{U_j}|^q \nu(i,j) \le \frac{\nu(i,j)}{\meas_2(U_i) \meas_2(U_j)} \int_{U_i} \int_{U_j} |u(x) - u(y)|^q \di \meas_2(x) \di \meas_2(y),
$$
and as the measure control condition \ref{5}.~ensures $\nu(i,j)=\max(\meas_2(U_i),\meas_2(U_j))\le Q_2 \meas_2(U_{k(i,j)}^*)$, the embracing condition \ref{4}.~implies
$$
|\{u\}_{U_i} - \{u\}_{U_j}|^q \nu(i,j)\le \frac{Q_2}{\meas_2(U_{k(i,j)}^*)} \int_{U_{k(i,j)}^*} \int_{U_{k(i,j)}^*} |u(x) - u(y)|^q \di \meas_2(x) \di \meas_2(y)
$$
and then
$$
|\{u\}_{U_i} - \{u\}_{U_j}|^q \nu(i,j)\le Q_2 2^q  \int_{U_{k(i,j)}^*} |u - \{u\}_{U_{k(i,j)}^*}|^q \di \meas_2
$$
where we have used again the convexity of $t \mapsto |t|^q$. Summing over $(i,j) \in \cE$, we get
\begin{equation}\label{eq:44444}
\sum_{(i,j)\in \cE} |\{u\}_{U_i} - \{u\}_{U_j}|^q \nu(i,j) \le Q_2 2^q \sum_{(i,j)\in \cE} \int_{U_{k(i,j)}^*}|u - \{u\}_{U_{k(i,j)}^*}|^q \di \meas_2.
\end{equation}
Then \eqref{eq:2} yields to
\begin{equation}\label{eq:55555}
\sum_{(i,j)\in \cE} |\{u\}_{U_i} - \{u\}_{U_j}|^q \nu(i,j) \le Q_2 2^q  S_c^{q/p} \left( \sum_{(i,j)\in \cE} \int_{U_{k(i,j)}^\#} g^p \di \meas_1 \right)^{q/p}.
\end{equation}
Finally, a simple counting argument shows that 
\begin{equation}\label{eq:66666}
\sum_{(i,j)\in \cE} \int_{U_{k(i,j)}^\#} g^p \di \meas_1 \le Q_1^3 \int_{A^\#} g^p \di \meas.
\end{equation}
The result follows from combining \eqref{eq:11111}, \eqref{eq:22222}, \eqref{eq:33333}, \eqref{eq:44444}, \eqref{eq:55555} and \eqref{eq:66666}.
\end{proof} 

A similar statement holds if we replace the discrete $L^q$-Poincaré inequality by a discrete ``$L^q$-Poincaré-Neumann'' version:
\begin{equation}\label{eq:discretePoincaréNeumann}
\left( \sum_{i \in \cV} |f(i) - \nu(f)|^q \nu(i) \right)^{\frac{1}{q}} \le S_{d} \left( \sum_{\{ i,j \} \in \cE} |f(i) - f(j)|^q \nu(i,j) \right)^{\frac{1}{q}}
\end{equation}
for all compactly supported $f:\cV \to \R$, where $\nu(f) = \left(  \sum_{i \, : \, f(i) \neq 0} \nu(i)\right)^{-1} \sum_{i} f(i)\nu(i)$. The terminology ``Poincaré-Neumann'' comes from the mean-value in the left-hand side of \eqref{eq:discretePoincaréNeumann} and the analogy with the local Poincaré inequality used in the study of the Laplacian on bounded Euclidean domains with Neumann boundary conditions, see \cite[Sect.~1.5.2]{Saloff-Coste}.

\begin{theorem}[Patching theorem - Neumann version]\label{th:patching2}
Let $(X,\dist)$ be a metric space equipped with two Borel measures $\meas_1$ and $\meas_2$, both finite and nonzero on balls with finite and nonzero radius, such that $\supp(\meas_1) = \supp(\meas_2)=X$. Let $A\subset A^\# \subset X$ be two Borel sets such that $0<\meas(A) < +\infty$ and $p,q \in [1,+\infty)$ such that $q\ge p$. Assume that $(A,A^{\#})$ admits a good covering $(U_i,U_i^*,U_i^\#)$ with respect to $(\meas_1,\meas_2)$ which satisfies the local $L^{q,p}$-Sobolev-Neumann inequalities \eqref{eq:1} and \eqref{eq:2} and whose associated weighted graph $(\cV,\cE,\nu)$ satisfies the discrete $L^q$-Poincaré-Neumann inequality \eqref{eq:discretePoincaréNeumann}. Then there exists a constant $C=C(p,q,Q_1,Q_2,S_c,S_d) > 0$ such that for any $u \in C_c(A^\#)$ and any upper gradient $g \in L^p(A^\#,\meas_1)$, \[ \left( \int_{A} |u - \{ u \}_A|^{q} \di \meas_2 \right)^{\frac{1}{q}} \le C \left( \int_{A^{\#}} g^{p} \di \meas_1 \right)^{\frac{1}{p}}.\]
\end{theorem}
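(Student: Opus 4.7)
The plan is to run the proof of Theorem \ref{th:patching} essentially unchanged, with one preliminary reduction that replaces the centering constant $\{u\}_A$ on the left-hand side by $\nu(f)$, where $f(i) := \{u\}_{U_i}$. The obstacle to doing this naively is twofold: one cannot simply translate $u$ by the constant $\{u\}_A$ since this breaks compact support in $A^\#$, and one cannot just expand $|\{u\}_{U_i} - \{u\}_A|^q \le 2^{q-1}(|f(i)-\nu(f)|^q + |\nu(f)-\{u\}_A|^q)$ and sum over $i$, because the resulting term $|\nu(f) - \{u\}_A|^q \sum_i \nu(i)$ involves a sum of vertex weights that need not be finite when $\meas_2(A^\#) = +\infty$. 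This quasi-centering step is the only place where the argument genuinely departs from Theorem \ref{th:patching}.

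To bypass this, I would first exploit a quasi-minimality property of $\{u\}_A$: for any $c \in \R$, the triangle inequality $|u-\{u\}_A|^q \le 2^{q-1}(|u-c|^q + |c - \{u\}_A|^q)$ combined with Jensen's inequality $|c-\{u\}_A|^q \meas_2(A) \le \int_A |u-c|^q\,d\meas_2$ yields
\[
\int_A |u-\{u\}_A|^q\,d\meas_2 \le 2^q \int_A |u-c|^q\,d\meas_2 \qquad \forall c \in \R.
\]
Here the hypothesis $0 < \meas_2(A) < +\infty$ is used in an essential way. I would then take $c := \nu(f)$, which is well-defined because $u \in C_c(A^\#)$ and the overlapping condition makes $f$ compactly supported on $\cV$.

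With $\{u\}_A$ replaced by $\nu(f)$, the remainder is a verbatim transcription of the proof of Theorem \ref{th:patching}: decompose $\int_A |u - \nu(f)|^q\,d\meas_2 \le \sum_i \int_{U_i} |u - \nu(f)|^q \, d\meas_2$ and then split each integrand via $|u-\nu(f)|^q \le 2^{q-1}(|u-f(i)|^q + |f(i)-\nu(f)|^q)$. The first sum $\sum_i \int_{U_i} |u-f(i)|^q\,d\meas_2$ is controlled by $S_c^{q/p} Q_1^{q/p}\bigl(\int_{A^\#} g^p\,d\meas_1\bigr)^{q/p}$ via \eqref{eq:1} and the overlapping condition, exactly as in \eqref{eq:22222}. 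The second sum $\sum_i |f(i)-\nu(f)|^q \nu(i)$ is bounded using the discrete Poincaré-Neumann inequality \eqref{eq:discretePoincaréNeumann} and the resulting edge sum is then processed exactly as in \eqref{eq:44444}--\eqref{eq:66666} via double Hölder, the embracing and measure control conditions, \eqref{eq:2}, and a final application of the overlapping condition. Combining all pieces produces a constant $C$ depending only on $p, q, Q_1, Q_2, S_c, S_d$, as claimed.
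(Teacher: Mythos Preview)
Your argument is correct and is exactly the route the paper intends (the paper omits the proof and defers to \cite[Th.~1.10]{Minerbe}): after the quasi-minimality reduction $\int_A|u-\{u\}_A|^q\,d\meas_2\le 2^q\int_A|u-\nu(f)|^q\,d\meas_2$, you rerun the proof of Theorem~\ref{th:patching} verbatim with \eqref{eq:discretePoincaréNeumann} in place of \eqref{eq:discretePoincaré}. One minor caveat: the overlapping condition~\ref{3} is only a \emph{local} finiteness statement and does not by itself guarantee that $f(i)=\{u\}_{U_i}$ is finitely supported on $\cV$ when $u\in C_c(A^\#)$, since the $U_i$ are merely Borel; in the paper's sole application (Claim~\ref{claim2}) the good covering is finite so nothing is at stake, but in the abstract statement one should either add a local-finiteness hypothesis on the covering or extend \eqref{eq:discretePoincaréNeumann} to $f\in L^q(\cV,\nu)$ by truncation.
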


The proof of Theorem \ref{th:patching2} is similar to the proof of Theorem \ref{th:patching} and writes exactly as \cite[Th.~1.10]{Minerbe} with upper gradients instead of norms of gradients, so we skip it.\\

\section{Proof of the main result}

In this section, we prove Theorem \ref{th:weightedSobolev} after a few preliminary results.

As already pointed out in \cite{Minerbe}, the local continuous $L^{2^*,2}$-Sobolev-Neumann inequalities on Riemannian manifolds (where $2^*=2n/(n-2)$ and $n$ is the dimension of the manifold) can be derived from the doubling condition and the uniform strong local $L^2$-Poincaré inequality which are both implied by non-negativity of the Ricci curvature. However, the discrete $L^{2^*}$-Poincaré inequality requires an additional reverse doubling condition which is an immediate consequence of the growth condition (\ref{eq:growthcondition}), as shown in the next lemma.

\begin{lemma}\label{lemma}
Let $(Y,\dist_Y, \meas_Y)$ be a metric measure space such that
\begin{equation}\label{eq:lemma}
0 < \Theta_{inf}:= \liminf_{r\to +\infty} \frac{\meas_Y(B_r(y_o))}{r^\alpha} \le \Theta_{sup}:= \limsup_{r\to +\infty} \frac{\meas_Y(B_r(y_o))}{r^\alpha} < +\infty
\end{equation}
for some $y_o \in Y$ and $\alpha>0$. Then there exists $A>0$ and $C_{RD}=C_{RD}(\Theta_{inf},\Theta_{sup})>0$ such that
\begin{equation}\label{eq:reverselemma}
\frac{\meas_Y(B_R(y_o))}{\meas_Y(B_r(y_o))} \ge C_{RD} \left( \frac{R}{r} \right)^{\alpha} \qquad \forall \, A<r\le R.
\end{equation}
\end{lemma}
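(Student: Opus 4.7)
My plan is to prove this by directly unpacking the definitions of $\liminf$ and $\limsup$ in \eqref{eq:lemma}, which immediately pin down the size of $\meas_Y(B_s(y_o))/s^\alpha$ for large $s$. Since $\Theta_{inf}$ and $\Theta_{sup}$ are both finite and strictly positive, I can find a threshold $A>0$ such that
\[
\frac{\Theta_{inf}}{2}\le \frac{\meas_Y(B_s(y_o))}{s^\alpha}\le 2\Theta_{sup}\qquad\forall\,s>A,
\]
the lower bound being granted for large $s$ by the definition of $\Theta_{inf}$ (which ensures the ratio is eventually $\ge\Theta_{inf}/2$) and the upper bound by the definition of $\Theta_{sup}$ (which ensures the ratio is eventually $\le 2\Theta_{sup}$); taking the larger of the two thresholds gives the claimed $A$.

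Then, for any $A<r\le R$, I would apply the lower bound at $s=R$ to the numerator and the upper bound at $s=r$ to the denominator, obtaining
\[
\frac{\meas_Y(B_R(y_o))}{\meas_Y(B_r(y_o))}\ge \frac{(\Theta_{inf}/2)\,R^\alpha}{2\Theta_{sup}\, r^\alpha}=\frac{\Theta_{inf}}{4\Theta_{sup}}\left(\frac{R}{r}\right)^\alpha,
\]
so \eqref{eq:reverselemma} holds with $C_{RD}:=\Theta_{inf}/(4\Theta_{sup})$, which depends only on $\Theta_{inf}$ and $\Theta_{sup}$ as required.

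There is no genuine obstacle here: the statement is essentially a rewriting of the assumption \eqref{eq:lemma}, the only mildly delicate point being to package the two one-sided estimates from $\liminf$ and $\limsup$ behind a single threshold $A$, which is standard.
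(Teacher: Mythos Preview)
Your proof is correct and follows exactly the same approach as the paper's, including the same threshold argument and the same explicit constant $C_{RD}=\Theta_{inf}/(4\Theta_{sup})$.
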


\begin{proof}
The growth condition (\ref{eq:lemma}) implies the existence of $A>0$ such that for any $R \ge r > A$, $\Theta_{inf}/2 \le r^{-\alpha} \meas_Y(B_r(y_o)) \le 2 \Theta_{sup}$ and $R^{-\alpha}\meas_Y(B_R(y_o)) \ge \Theta_{inf}/2$, whence \eqref{eq:reverselemma} with $C_{RD}=\Theta_{inf}/(4\Theta_{sup})$.
\end{proof}

\begin{remark}
Note that the doubling condition \eqref{doubling} easily implies \eqref{eq:reverselemma}: see for instance \cite[p.9]{Grigor'YanHuLau} for a proof giving $C_{RD}=(1+C_{D}^{-4})^{-1}$ and $\alpha = \log_{2}(1+C_{D}^{-4})$. But in this case, $\alpha>1$ if and only if $C_{D}<1$ which is impossible. So we emphasize that in our context, in which we want the segment $(1,\alpha)$ to be non-empty, doubling and reverse doubling must be thought as complementary hypotheses.
\end{remark}

The next result, a strong local $L^p$-Sobolev inequality for $\CD(0,N)$ spaces, is an important technical tool for our purposes. In the context of Riemannian manifolds, it was proved by Maheux and Saloff-Coste \cite{MaheuxSaloffCoste}.


\begin{lemma}\label{lem:averaged}
Let $(Y,\dist_Y,\meas_Y)$ be a $\CD(0,N)$ space. Then for any $p \in [1,N)$ there exists $C=C(N,p)>0$ such that for any $u \in C(Y)$, any upper gradient $g \in L^1_{loc}(Y,\meas_Y)$, and any ball $B$ with arbitrary radius $r>0$,
\begin{equation}\label{eq:averaged}
\left( \int_B |u - u_B|^{p^*} \di \meas_Y \right)^{\frac{1}{p^*}} \le C \frac{r}{\meas_Y(B)^{1/N}} \left( \int_B g^p \di \meas_Y \right)^{\frac{1}{p}},
\end{equation}
where $p^*=Np/(N-p)$.
\end{lemma}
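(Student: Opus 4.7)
My plan is to recognize Lemma \ref{lem:averaged} as the output of the Hajlasz--Koskela machinery for Sobolev--Poincaré inequalities: on a doubling metric measure space enjoying a weak $L^1$-Poincaré inequality and the truncation property, together with an $N$-dimensional lower mass bound on concentric balls, a strong $(p^*,p)$-Sobolev--Poincaré inequality holds on every ball.

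First I would collect the four ingredients supplied by the $\CD(0,N)$ assumption. By the Bishop--Gromov inequality \cite[Th.~30.11]{Villani}, the map $r\mapsto \meas_Y(B_r(y))/r^N$ is non-increasing on $(0,+\infty)$; this simultaneously delivers the doubling condition \eqref{doubling} with $C_D=2^N$ and the comparison
\[
\frac{\meas_Y(B_r(y))}{\meas_Y(B_R(y))}\ge \left(\frac{r}{R}\right)^{N}\qquad\forall\,0<r\le R,
\]
which is the $N$-lower Ahlfors regularity needed to fix the exponent $p^*=Np/(N-p)$. Proposition \ref{prop:Rajala} provides the uniform weak local $L^1$-Poincaré inequality with dilation $\lambda=2$ and universal constant, and Proposition \ref{truncation} certifies that every pair $(u,g)$ with $u\in C(Y)$ and $g\in L^1_{loc}(Y,\meas_Y)$ an upper gradient of $u$ has the truncation property.

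Second, I would feed these ingredients into the general Hajlasz--Koskela theorem \cite[Th.~5.1 \& Sec.~9]{HajlaszKoskela}: doubling, the weak $L^1$-Poincaré inequality and the truncation property jointly imply, for every $1\le p<N$, a weak $(p^*,p)$-Sobolev--Poincaré inequality
\[
\left(\frac{1}{\meas_Y(B)}\int_B |u-u_B|^{p^*}\di\meas_Y\right)^{1/p^*}\le C\,r\left(\frac{1}{\meas_Y(\sigma B)}\int_{\sigma B} g^p\di\meas_Y\right)^{1/p}
\]
on every ball $B$ of radius $r$, for some dilation $\sigma>1$, where the exponent $p^*$ is precisely dictated by the $N$-lower Ahlfors regularity extracted above. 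To absorb $\sigma B$ into $B$ on the right-hand side, I would invoke that every ball in a geodesic space (hence in our $\CD(0,N)$ space) is a John domain with a constant depending only on $\sigma$ and the doubling constant, and apply the standard Whitney covering / chaining argument of Hajlasz--Koskela for John domains to produce the strong version with $B$ on both sides.

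Third, a brief rewriting finishes the proof: multiplying through by $\meas_Y(B)^{1/p^*}$ and using $1/p^*-1/p=-1/N$ converts the average inequality into
\[
\left(\int_B |u-u_B|^{p^*}\di\meas_Y\right)^{1/p^*}\le C\,r\,\meas_Y(B)^{-1/N}\left(\int_{B} g^p\di\meas_Y\right)^{1/p},
\]
which is \eqref{eq:averaged}. Since $C_D=2^N$, Rajala's Poincaré constant is universal, the John constant of a ball in a geodesic doubling space depends only on the dilation and the doubling constant, and the truncation bookkeeping depends only on $p$ and the Poincaré data, the resulting constant $C$ depends only on $N$ and $p$, as required.

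The main obstacle I anticipate is the weak-to-strong upgrade step: keeping track of the constants produced by the John-domain chaining so that they depend solely on $N$ and $p$, and not on the particular ball $B$. In the Riemannian framework of \cite{MaheuxSaloffCoste} this is implicit in the uniformity of local volume comparison, whereas here one must rely on the geodesic character of $(Y,\dist_Y)$ to guarantee that the John constant of balls is universal; the truncation property of Proposition \ref{truncation} plays in this non-smooth context the role formerly played by the Leibniz rule for smooth cut-off functions in \cite{MaheuxSaloffCoste}.
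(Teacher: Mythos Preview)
Your proposal is correct and follows essentially the same route as the paper's own proof: both assemble doubling and the $N$-lower mass bound from Bishop--Gromov, Rajala's weak $L^1$-Poincaré inequality (Proposition~\ref{prop:Rajala}) and the truncation property (Proposition~\ref{truncation}), then invoke \cite[Th.~5.1]{HajlaszKoskela} to obtain the weak $(p^*,p)$-Sobolev--Poincaré inequality on an enlarged ball, and finally use that balls in a proper geodesic space are John domains with a universal constant \cite[Cor.~9.5, Th.~9.7]{HajlaszKoskela} to pass to the strong inequality on $B$. One minor remark: the John constant of a ball in a geodesic space is universal and does not involve the doubling constant, so your dependence bookkeeping in the upgrade step is actually better than you state.
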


\begin{proof}
Let $u$ be a continuous function on $Y$, $g \in L^1_{loc}(Y,\meas_Y)$ be an upper gradient of $u$, $B$ be a ball with arbitrary radius $r>0$, and $p\in[1,N)$. In this proof $u_B$ stands for $\meas_Y(B)^{-1} \int_B u \di \meas_Y$. Thanks to Hölder's inequality and the doubling property, Proposition \ref{prop:Rajala} implies
\[
\fint_{B}|u-u_{B}| \di\meas_Y \le 2^{N+2} r \left( \fint_{2B} g^p \di\meas_Y \right)^{1/p}.
\]
Let $x_0, x_1 \in Y$ and $r_0, r_1>0$ be such that $x_1 \in B_{r_0}(x_0)$ and $r_1 \le r_0$. Then
$$
\frac{\meas_Y(B_{r_1}(x_1))}{\meas_Y(B_{r_0}(x_0))} \ge \frac{\meas_Y(B_{r_1}(x_1))}{\meas_Y(B_{r_0+\dist_Y(x_0,x_1)}(x_1))} \ge 2^{-N} \left( \frac{r_1}{r_0 + \dist_Y(x_0,x_1)} \right)^N \ge 2^{-2N} \left( \frac{r_1}{r_0} \right)^N
$$
by the doubling condition. Moreover, we know from Proposition \ref{truncation} that $(u,g)$ satisfies the truncation property, so that \cite[Th.~5.1, 1.]{HajlaszKoskela} applies and gives
\[
\left( \fint_{B} |u - u_{B}|^{p^{*}} \di\meas_Y \right)^{1/p^{*}} \le \tilde{C} r \left( \fint_{10B} g^p \di\meas_Y \right)^{1/p}
\]
where $\tilde{C}$ depends only on $p$ and the doubling and Poincaré constants of $(Y,\dist_Y,\meas_Y)$ which depend only on $N$.
As $(Y,\dist_Y,\meas_Y)$ is a $CD(0,N)$ space, the metric structure $(Y,\dist_Y)$ is proper and geodesic, so it follows from \cite[Cor.~9.5]{HajlaszKoskela} that all the balls in $Y$ are John domains with a universal constant $C_J>0$. Then \cite[Th.~9.7]{HajlaszKoskela} applies and yields to the result since $1/p^* - 1/p = 1/N$.
\end{proof}


Finally, let us state a result whose proof - omitted here - can be deduced from \cite[Prop.~2.8]{Minerbe} by using Proposition \ref{prop:Rajala}. Note that even if Proposition \ref{prop:Rajala} provides only a weak inequality, one can harmlessly substitute it to the strong one used in the proof of \cite[Prop.~2.8]{Minerbe}, because it is applied there to a function $f$ which is Lipschitz on a ball $B$ and extended by $0$ outside of $B$. Note also that Proposition \ref{prop:Rajala} being a $L^1$-Poincaré inequality, we can assume $\alpha >1$ (a $L^2$-Poincaré inequality would have only permit $\alpha >2$).  
\begin{prop}\label{prop:RCA}
Let $(Y,\dist_Y,\meas_Y)$ be a $\CD(0,N)$ space  satisfying the growth condition \eqref{eq:lemma} with $\alpha>1$. Then there exists $\kappa_{0}=\kappa_{0}(N,\alpha)>1$ such that for any $R>0$ such that $S_R(y_o)$ is non-empty, for any couple of points $(x,x') \in S_R(y_o)^2$, there exists a rectifiable curve from $x$ to $x'$ that remains inside $B_R(y_o) \backslash B_{\kappa_{0}^{-1}R}(y_o)$.
\end{prop}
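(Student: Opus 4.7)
The proof proceeds by contradiction, using the $L^1$-Poincaré inequality of Proposition \ref{prop:Rajala} (in place of the strong $L^2$-version available in the smooth case) against the reverse-doubling estimate of Lemma \ref{lemma}. Suppose no such $\kappa_0$ exists. Then for each $\kappa > 1$ there exist $R = R(\kappa) > 0$ with $S_R(y_o) \neq \emptyset$ and a pair $(x, x') \in S_R(y_o)^2$ such that every rectifiable curve from $x$ to $x'$ inside $B_R(y_o)$ enters $B_{\kappa^{-1}R}(y_o)$; at the cost of enlarging $\kappa$, I may assume $R$ large enough for Lemma \ref{lemma} to apply. Let $\mathcal{A} := B_R(y_o) \setminus \overline{B_{\kappa^{-1}R}(y_o)}$ and denote by $\Omega_+$, $\Omega_-$ the sets of points of $\mathcal{A}$ rectifiably connected within $\mathcal{A}$ to $x$ and $x'$ respectively; by hypothesis these are disjoint open subsets of $\mathcal{A}$.

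The first substantive step is a \emph{cluster volume estimate} $\meas_Y(\Omega_\pm) \gtrsim V(y_o, R)$, with constants independent of $\kappa$. I would obtain this via a Bishop-Gromov ``cone'' argument: geodesics from $y_o$ to points in a small ball around $x$ (which exist by the geodesic property of $\CD(0,N)$ spaces) lie within $\Omega_+$ throughout the radial range $(\kappa^{-1}R, R)$, and the swept-out volume is comparable to $V(y_o, R)$ by the Bishop-Gromov inequality. I expect this to be the main obstacle, as one must replicate without smooth geodesic flow the cone estimate underlying \cite[Prop.~2.8]{Minerbe}, using only the metric-measure toolkit of $\CD(0,N)$ spaces.

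Next, following Minerbe's template, I would construct a continuous Lipschitz test function $f : Y \to [0,1]$ of the form $f = \phi(\dist_Y(y_o, \cdot))\,\chi_{\Omega_+}$ (with a mild auxiliary cutoff to ensure continuity at $\partial B_R(y_o)$, whose contribution is absorbed in the constants), where $\phi : [0,+\infty) \to [0,1]$ is the piecewise linear map equal to $0$ on $[0, \kappa^{-1}R]$, linearly rising to $1$ on $[\kappa^{-1}R, 2\kappa^{-1}R]$, and equal to $1$ afterwards. Any rectifiable curve entering $\Omega_+$ from outside $\mathcal{A}$ must traverse the shell where $\phi$ grows, so $f$ admits an upper gradient $g$ essentially supported in $B_{2\kappa^{-1}R}(y_o)$ and of size $O(\kappa/R)$. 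Applying Proposition \ref{prop:Rajala} on the ball $B_R(y_o)$ to $(f, g)$ yields
\[
\int_{B_R(y_o)} \bigl|f - f_{B_R(y_o)}\bigr|\, d\meas_Y \;\le\; 4R\int_{B_{2R}(y_o)} g\, d\meas_Y \;\le\; C\,\kappa\, V(y_o, 2\kappa^{-1}R).
\]

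By the cluster volume estimate, both $\{f = 1\} \cap B_R(y_o) \supseteq \Omega_+ \setminus B_{2\kappa^{-1}R}(y_o)$ and $\{f = 0\} \cap B_R(y_o) \supseteq \Omega_-$ have $\meas_Y$-measure $\gtrsim V(y_o, R)$, so a standard calculation bounds the left-hand side below by $c\,V(y_o, R)$. Comparing the two bounds yields $V(y_o, R) \le C'\kappa\, V(y_o, 2\kappa^{-1}R)$, which flatly contradicts the reverse-doubling estimate $V(y_o, R)/V(y_o, 2\kappa^{-1}R) \ge C_{RD}(\kappa/2)^\alpha$ of Lemma \ref{lemma} as soon as $\kappa^{\alpha - 1} > 2^\alpha C'/C_{RD}$ — which is possible since $\alpha > 1$. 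Choosing $\kappa_0 = \kappa_0(N, \alpha)$ to be this threshold completes the proof.
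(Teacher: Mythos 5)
Your overall plan---argue by contradiction, test a cluster-based cutoff function against Rajala's weak $L^1$-Poincar\'e inequality (Proposition \ref{prop:Rajala}), and contradict the reverse doubling of Lemma \ref{lemma}, which is exactly why $\alpha>1$ suffices---is the intended one: the paper does not write this proof out but refers to \cite[Prop.~2.8]{Minerbe}, whose argument has precisely this shape. Still, as written your proposal has two genuine gaps. The first is the cluster volume bound $\meas_Y(\Omega_\pm)\ge c\,\meas_Y(B_R(y_o))$, which you only announce: the ``cone of geodesics'' sketch is not available in a $\CD(0,N)$ space (Bishop--Gromov controls volumes of balls, not of sets swept by geodesics; a sweeping estimate would need a measure-contraction-type property, and geodesics from $y_o$ to points of a small ball around $x$ need not even lie in $\Omega_+$, since such points may sit outside $B_R(y_o)$). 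This gap is filled more cheaply than you expect: take a geodesic $\gamma$ from $y_o$ to $x$, set $z=\gamma(t)$ with $t=(1+\kappa^{-1})R/2$ and $\rho=(1-\kappa^{-1})R/4$; then $B_\rho(z)$ lies in the annulus, is rectifiably connected (geodesics issued from $z$ stay in it), is joined to $x$ along $\gamma$ inside the annulus, and has measure at least $2^{-4N}\meas_Y(B_R(y_o))$ for $\kappa\ge 2$ by the doubling property, uniformly in $\kappa$.

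The second gap is more serious: the outer boundary is mishandled. With your definition of $\Omega_+$ (connectivity inside $\mathcal{A}=B_R(y_o)\setminus\overline{B_{\kappa^{-1}R}(y_o)}$), the claim that any curve entering $\Omega_+$ from outside $\mathcal{A}$ must traverse the inner shell is false: curves may enter and leave through $S_R(y_o)$. Consequently $\phi(\dist_Y(y_o,\cdot))\chi_{\Omega_+}$ jumps across $S_R(y_o)$ and admits no locally integrable upper gradient, so Proposition \ref{prop:Rajala} does not apply to it; and the ``mild auxiliary cutoff'' is not mild: a cutoff dropping from $1$ to $0$ in a shell of width $\delta R$ near $S_R(y_o)$ adds an upper-gradient term of size $(\delta R)^{-1}$ on a set whose measure is typically of order $\delta\,\meas_Y(B_R(y_o))$ (already in $\R^n$), so after the factor $4R$ in Rajala's inequality its contribution is of the same order as your lower bound for the left-hand side and the contradiction disappears. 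The clean repair is to define the cluster \emph{without} the outer constraint, as the set of points joinable to $x$ inside $X\setminus B_{\kappa^{-1}R}(y_o)$: then the indicator is locally constant away from $\overline{B_{\kappa^{-1}R}(y_o)}$, the test function is continuous with upper gradient supported in the inner shell only, and your computation runs---but it then proves only that $x$ and $x'$ can be joined while avoiding $B_{\kappa^{-1}R}(y_o)$, not that the curve stays inside $B_R(y_o)$ as the statement requires; recovering that outer confinement needs a further argument, and this is precisely the part for which the paper leans on \cite[Prop.~2.8]{Minerbe}. A smaller slip in the same spirit: ``enlarging $\kappa$ to make $R$ large'' is not legitimate, since failure of connectivity at a larger $\kappa$ implies failure at smaller $\kappa$ and not conversely, so the radii produced by the contradiction hypothesis need not be large, which matters because the reverse doubling of Lemma \ref{lemma} (whose constant, by the way, brings a dependence on $\Theta_{inf},\Theta_{sup}$ into $\kappa_0$) only holds above a space-dependent scale.
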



Let us prove now Theorem \ref{th:weightedSobolev}. Let $(X,\dist,\meas)$ be a non-compact $\CD(0,N)$ space with $N \ge 3$ satisfying the growth condition (\ref{eq:growthcondition}) with parameter $\eta \in (1,N]$, and $p \in [1,\eta)$. We recall that $\mu$ is the measure absolutely continuous with respect to $\meas$ with density $w_o = V(o,\dist(o,\cdot))^{p/(N-p)} \dist(o,\cdot)^{-Np/(N-p)}$, and that $p^*=Np/(N-p)$. Note that Lemma \ref{lemma} applied to $(X,\dist,\meas)$, assuming with no loss of generality that $A=1$, implies:
\begin{equation}\label{eq:reverse}
\frac{V(o,R)}{V(o,r)} \ge C_{RD} \left(\frac{R}{r} \right)^\eta \qquad \forall \, 1 < r < R.\\
\end{equation}

\hfill

\noindent \underline{STEP 1:} The good covering.\\

Let us briefly explain how to construct a good covering on $(X,\dist,\meas)$, referring to \cite[Section 2.3.1]{Minerbe} for additional details. Define $\kappa$ as the square-root of the constant $\kappa_{0}$ given by Proposition \ref{prop:RCA}. Then for any $R>0$, two connected components $X_1$ and $X_2$ of $B_{\kappa R}(o)\backslash B_R(o)$ are always contained in one component of $B_{\kappa R}(o)\backslash B_{\kappa^{-1} R}(o)$: otherwise, linking $x \in \overline{X_1}\cap S_{\kappa R}(o)$ and $x' \in \overline{X_2} \cap S_{\kappa R}(o)$ by a curve remaining inside $B_{\kappa R}(o)\backslash B_{\kappa^{-1} R}(o)$ would not be possible.


Every point in a complete geodesic metric space of infinite diameter is the origin of some geodesic ray: see e.g.~\cite[Prop.~10.1.1]{Papadopoulos}. Therefore, there exists a geodesic ray $\gamma$ starting from $o$. For any $i \in \N$, let us write $A_{i}=B_{\kappa^{i}}(o) \backslash B_{\kappa^{i-1}}(o)$ and denote by $(U'_{i,a})_{0\le a \le h_{i}'}$ the connected components of $A_{i}$, $U_{i,0}'$ being set as the one intersecting $\gamma$. The next simple result was used without a proof in \cite{Minerbe}.

\begin{claim}\label{lem2}
There exists a constant $h=h(N,\kappa) < \infty$ such that $\sup_i h_{i}' \le h$.
\end{claim}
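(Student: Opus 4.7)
My plan is to prove the claim by a volume-comparison counting argument: I will place in each component $U'_{i,a}$ of $A_i = B_{\kappa^i}(o) \setminus \overline{B_{\kappa^{i-1}}(o)}$ an open ball of uniform radius $r_i \sim \kappa^i$ entirely contained in the component, and then count these disjoint balls via Bishop-Gromov applied on the $\CD(0,N)$ space $X$.

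First I would exhibit, in each $U'_{i,a}$, a point $x_{i,a}$ with $d(o,x_{i,a}) = \kappa^{i-1/2}$. For the ``inner'' direction, joining any $y \in U'_{i,a}$ to $o$ by a geodesic and restricting to its portion in $A_i$ yields a connected arc lying in $U'_{i,a}$ whose distance to $o$ decreases down to $\kappa^{i-1}$. For the ``outer'' direction, since $X$ is non-compact, proper and connected, I would take a continuous path from $y$ to some $z \in X$ with $d(o,z) > \kappa^i$; a first-exit analysis of this path from the open component $U'_{i,a}$, combined with the intermediate value theorem applied to the $1$-Lipschitz function $d(o,\cdot)$, then yields the desired $x_{i,a}$.

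Next, setting $r_i := \tfrac{1}{2}\kappa^{i-1}(\kappa^{1/2}-1)$, the triangle inequality gives $B_{r_i}(x_{i,a}) \subset A_i$, and since this open ball is connected and sits in $A_i$, it lies in the single component $U'_{i,a}$; hence $\{B_{r_i}(x_{i,a})\}_a$ is a pairwise disjoint family. Bishop-Gromov on $X$, in the form $s \mapsto V(x_{i,a},s)/s^N$ non-increasing, applied with outer scale $R := 2\kappa^i$ (which ensures $B_R(x_{i,a}) \supset B_{\kappa^i}(o)$, so $V(x_{i,a},R) \geq V(o,\kappa^i)$) gives
\[
V(x_{i,a}, r_i) \geq \left(\frac{r_i}{R}\right)^N V(x_{i,a}, R) \geq c(N,\kappa)\, V(o,\kappa^i),
\]
for a constant $c(N,\kappa) > 0$. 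Summing over $a$ and using $\bigsqcup_a B_{r_i}(x_{i,a}) \subset B_{\kappa^i}(o)$ yields $h_i' \leq 1/c(N,\kappa) =: h(N,\kappa)$, uniformly in $i$.

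The main obstacle will be the first step: showing that \emph{every} component of $A_i$ -- not only the distinguished one $U'_{i,0}$ containing the ray $\gamma$ -- crosses the middle sphere $S_{\kappa^{i-1/2}}(o)$. A priori a ``thin'' component might be confined to a narrow radial band near $S_{\kappa^{i-1}}(o)$, foiling the naive IVT. The remedy, to be made rigorous, is the path-to-infinity construction in the connected, non-compact space $X$: the first-exit point of such a path from the component belongs to $\partial U'_{i,a} \subset S_{\kappa^{i-1}}(o) \cup S_{\kappa^i}(o)$, and a careful case analysis (together, if needed, with an iterative reduction on the thinner sub-annulus) supplies the middle-sphere crossing within $U'_{i,a}$.
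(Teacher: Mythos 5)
Your counting argument is essentially the paper's: in each component you place a ball of radius comparable to the width of the annulus, centered at a point of the component lying on a ``middle'' sphere, observe that these balls are pairwise disjoint and contained in $B_{\kappa^i}(o)$, and conclude by Bishop--Gromov that each has measure at least $c(N,\kappa)\,V(o,\kappa^i)$, hence there are at most $c(N,\kappa)^{-1}$ of them. The paper does exactly this with the arithmetic midpoint $(\kappa^i+\kappa^{i-1})/2$ and radius $(\kappa^i-\kappa^{i-1})/4$, bounding the smallest ball from below via doubling at its own center; your variant with the geometric midpoint $\kappa^{i-1/2}$, radius $\tfrac12\kappa^{i-1}(\kappa^{1/2}-1)$, and the uniform lower bound coming from monotonicity of $s\mapsto V(x,s)/s^N$ with outer scale $R=2\kappa^i$ is the same computation with different constants (and counting $h_i'+1$ rather than $h_i'$ components is harmless).

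The one place where you go beyond the paper is the step you yourself flag, and as written it is not proved. The inner direction is fine (a geodesic from $o$ to $y\in U'_{i,a}$ stays in the component once it enters $A_i$), but your outer argument --- a path from $y$ to a far-away point plus a first-exit/intermediate-value analysis --- only yields a middle-sphere point of $U'_{i,a}$ if the path reaches distance $(\kappa^{i-1/2}$ or so$)$ from $o$ \emph{before} leaving $A_i$ through the inner sphere $S_{\kappa^{i-1}}(o)$; if it exits inward first, the IVT says nothing about the component of $y$, which is exactly the ``thin pocket near the inner sphere'' scenario you acknowledge, and the promised ``careful case analysis / iterative reduction'' is never carried out. (A minor point: $U'_{i,a}$ need not be open, since $A_i$ contains its inner sphere, so ``first exit from the open component'' needs rewording.) So, taken as a standalone proof, there is a gap at precisely this point. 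It is fair to note that the paper's own proof does not address it either: it simply picks $x_a\in U'_{i,a}\cap S_{(\kappa^i+\kappa^{i-1})/2}(o)$ without comment, so on this point your write-up is no less complete than the paper --- only more candid --- and everything else in your proposal is correct and matches the paper's argument.
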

\begin{proof}
Take $i \in \N$. For every $0 \le a \le h_{i}'$, pick $x_{a}$ in $U_{i,a} \cap S_{(\kappa^{i}+\kappa^{i-1})/2}(o) $. As the balls $(B_a:=B_{(\kappa^i-\kappa^{i-1})/4}(x_a))_{0 \le a \le h_i'}$ are disjoint and all included in $B_{\kappa^{i}}(o)$, we have
\[ h_{i}' \min_{0 \le a \le h_{i}'} 
\meas(B_a) \le \sum_{0 \le a \le h_{i}'} \meas(B_a) \le V(o,\kappa^{i}). \] With no loss of generality, we can assume that $\min\limits_{0 \le a \le h_{i}'} \meas(B_a)  = \meas(B_0)$.  Notice that $d(o,x_{0}) \le \kappa^{i}$. Then
$$h_{i}' \le \frac{V(o,\kappa^{i})}{\meas(B_0)} \le \frac{V(x_{0},\kappa^{i} + \dist(o,x_{0}))}{\meas(B_0)} \le \left( \frac{8\kappa^{i}}{\kappa^{i}-\kappa^{i-1}}\right)^{N} $$ by the doubling condition.  This yields to the result with $h:=\left( \frac{8\kappa}{\kappa-1} \right)^{N}$.
\end{proof}

Define then the covering $(U_{i,a}',U_{i,a}'^{*},U_{i,a}'^{\#})_{i \in \N,0 \le a \le h_{i}'}$ where $U_{i,a}'^{*}$ is by definition the union of the sets $U_{j,b}'$ such that $\overline{U_{j,b}'} \cap \overline{U_{i,a}'} \neq \emptyset$, and $U_{i,a}'^{\#}$ is by definition the union of the sets $U_{j,b}'^{*}$ such that $\overline{U_{j,b}'^{*}} \cap \overline{U_{i,a}'^{*}} \neq \emptyset$. Note that $(U_{i,a}',U_{i,a}'^{*},U_{i,a}'^{\#})_{i \in \N,0 \le a \le h_{i}'}$ is not necessarily a good covering, as pieces $U_{i,a}'$ might be arbitrary small compared to their neighbors: in this case, the measure control condition \ref{5}.~would not be true. So whenever $\overline{U_{i+1,a}'} \cap S_{\kappa^{i+1}}(o) = \emptyset$ (this condition being satisfied by all ``small'' pieces), we set $U_{i,a} := U_{i+1,a}' \cup U_{i,a'}'$ where $a'$ is the integer such that $\overline{U_{i+1,a}'} \cap \overline{U_{i,a'}} \neq \emptyset$; otherwise we set $U_{i+1,a} := U_{i+1,a}'$.

\begin{figure}[h]
\centering
\def\svgwidth{\textwidth}
\scalebox{.5}{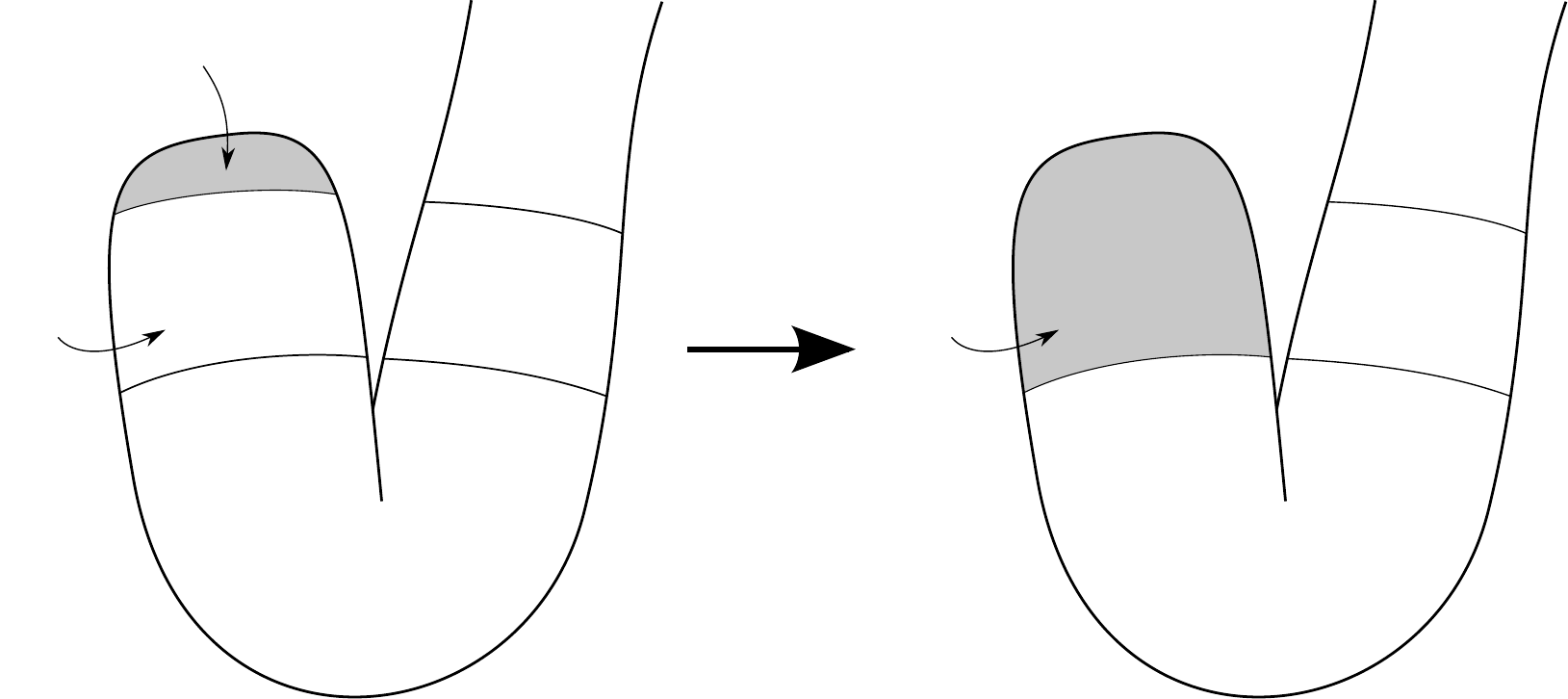}
\caption{for simplicity assume $a'=a$; if $U_{i+1,a}' \cap S_{\kappa^{i+1}}(o) = \emptyset$, then we glue the small piece $U_{i+1,a}'$ to the adjacent piece $U_{i,a}'$ to form $U_{i,a}$}
\end{figure}

We define $U_{i,a}^{*}$ and $U_{i,a}^{\#}$ in a similar way from $U_{i,a}'^{*}$ and $U_{i,a}'^{\#}$ respectively. Using the doubling condition, one can easily show that $(U_{i,a},U_{i,a}^{*},U_{i,a}^{\#})_{i \in \N, 0 \le a \le h_{i}} $ is a good covering of $(X,\dist)$ with respect to $(\mu,\meas)$, with constants $Q_1$ and $Q_2$ depending only on $N$.

\hfill

\noindent \underline{STEP 2:} The discrete $L^{p^*}$-Poincaré inequality.\\

Let $(\cV,\cE,\nu)$ be the weighted graph obtained from $(U_{i,a},U_{i,a}^{*},U_{i,a}^{\#})_{i \in \N, 0 \le a \le h_{i}} $. Define the degree $\deg(i,a)$ of a vertex $(i,a)$ as the number of vertices $(j,b)$ such that $\overline{U_{i,a}} \cap \overline{U_{j,b}} = \emptyset$. As a consequence of Claim \ref{lem2}, $\sup \{\deg(i,a) : (i,a) \in \cV\} \le 2h$. Moreover:

\begin{claim}
There exists $C \ge 1$ such that $C^{-1} \le \nu(j,b)/\nu(i,a) \le C$ for any $(i,a),(j,b) \in \cE$.
\end{claim}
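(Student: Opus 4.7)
The plan is to show that the weight $\nu(i,a) = \meas(U_{i,a})$ is comparable to $V(o, \kappa^i)$ with constants depending only on $N$ and $\kappa$, and then to exploit doubling to compare $V(o, \kappa^i)$ with $V(o, \kappa^j)$ across the at-most-one-unit scale gap of adjacent vertices.

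First, I would check that if $((i,a), (j,b)) \in \cE$ then $|i-j| \le 1$. By construction of STEP 1, $U_{i,a} \subset B_{\kappa^{i+1}}(o) \backslash B_{\kappa^{i-1}}(o)$: the outer bound reflects the possibility of small pieces from $A_{i+1}$ being glued in, while the inner bound is inherited from $A_i$. The closures of the corresponding annuli for two scales differing by at least $2$ are disjoint, so adjacency forces $|i-j| \le 1$.

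Second, I would establish the two-sided estimate
\[
c^{-1} V(o, \kappa^i) \le \meas(U_{i,a}) \le c \, V(o, \kappa^i)
\]
for some $c = c(N,\kappa) > 1$. The upper bound follows from $U_{i,a} \subset B_{\kappa^{i+1}}(o)$ together with the doubling property applied on the scales $\kappa^i$ and $\kappa^{i+1}$. For the lower bound, the key observation is that by the construction of STEP 1, every $U_{i,a}$ contains a connected component $U'_{i,a'}$ of $A_i$ whose closure meets $S_{\kappa^i}(o)$; otherwise $U'_{i,a'}$ itself would have been glued downward into $A_{i-1}$. Picking $x_0 \in U'_{i,a'} \cap S_{(\kappa^i+\kappa^{i-1})/2}(o)$, the ball $B_r(x_0)$ with $r = (\kappa^i - \kappa^{i-1})/4$ lies in $A_i$ and, being connected, inside $U'_{i,a'} \subset U_{i,a}$. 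A computation analogous to the one performed in the proof of Claim \ref{lem2}, based on $V(o,\kappa^i) \le V(x_0,2\kappa^i) \le C_D^k \meas(B_r(x_0))$ for some integer $k = k(N,\kappa)$, then yields the lower bound.

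Combining the two steps, for any edge $((i,a),(j,b)) \in \cE$ the bound $|i-j| \le 1$ together with doubling shows that $V(o,\kappa^i)$ and $V(o,\kappa^j)$ are comparable with constants depending only on $N$ and $\kappa$; hence the ratio $\nu(j,b)/\nu(i,a) = \meas(U_{j,b})/\meas(U_{i,a})$ is controlled above and below by the same universal constants, as claimed. The main technical subtlety I expect is verifying in full detail that every kept piece $U_{i,a}$ actually contains a ball of radius proportional to $\kappa^i$: this is precisely what the gluing rule of STEP 1 is designed to guarantee, by ensuring that every vertex of $\cV$ corresponds to a region whose core component of $A_i$ reaches out all the way to $S_{\kappa^i}(o)$.
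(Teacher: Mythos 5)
Your proof is correct and rests on the same mechanism as the paper's: an inscribed ball of radius $(\kappa^i-\kappa^{i-1})/4$ centered at a point of $U_{i,a}\cap S_{(\kappa^i+\kappa^{i-1})/2}(o)$, compared via doubling to a ball of radius comparable to $\kappa^{i+1}$ — you merely reorganize this through the intermediate quantity $V(o,\kappa^i)$ and supply justifications (existence of the middle-sphere point, $|i-j|\le 1$ for edges) that the paper takes for granted. One nitpick: the closures of the thickened annuli for levels $i$ and $i+2$ are not disjoint (they meet on $S_{\kappa^{i+1}}(o)$); the correct reason $|i-j|\le 1$ holds is that the glued small pieces of $A_{i+1}$ have compact closure avoiding $S_{\kappa^{i+1}}(o)$, which does not affect your argument.
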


\begin{proof}
Take $(i,a),(j,b) \in \cE$. With no loss of generality we can assume $j=i+1$. Take $x \in U_{i,a} \cap S_{(\kappa^{i}+\kappa^{i-1})/2}(o)$  and set $r=(\kappa^{i}-\kappa^{i-1})/4$, $R=2\kappa^{i+1}$, so that $B_r(x) \subset U_{i,a}$ and $U_{i+1,b} \subset B_R(x)$. Then the doubling condition implies
\[
\nu(i+1,b)\le \meas_2(B_R(x)) \le C_D (R/r)^{\log_2 C_D} \meas_2(B_r(x)) \le \bar{C} \nu(i,a)
\]
where $\bar{C}=C_D (8 \kappa^2/(\kappa-1))^{\log_2(C_D)} \ge 1$. A similar reasoning starting from $x \in U_{i+1,b} \cap S_{(\kappa^{i+1}+\kappa^{i})/2}(o)$ provides the existence of $C'\ge 1$ such that $\nu(i,a) \le C' \nu(i+1,b)$. Set $C=\max(\bar{C},C')$ to conclude.
\end{proof}

We are now in a position to apply \cite[Prop.~1.12]{Minerbe} which ensures that the discrete $L^1$-Poincaré inequality implies the $L^q$ one for any given $q\ge1$. But the discrete $L^1$-Poincaré inequality is equivalent to the isoperimetric inequality (\cite[Prop.~1.14]{Minerbe}): there exists a constant $\cI>0$ such that for any $\Omega \subset \cV$ with finite measure, $$\frac{\nu(\Omega)}{\nu(\partial \Omega)} \le \cI$$ where $\partial \Omega := \{ (i,a),(j,b)\in \cE : (i,a)\in\Omega, (j,b)\notin \Omega \}$. The only ingredients to prove this isoperimetric inequality are the doubling and reverse doubling conditions, see Section 2.3.3 in \cite{Minerbe}. Then the discrete $L^q$-Poincaré inequality holds for any $q \ge 1$, with a constant $S_d$ depending only on $q$, $\eta$, $\Theta_{inf}$, $\Theta_{sup}$ and on the doubling and Poincaré constants of $(X,\dist,\meas)$, i.e. on $N$. In case $q=p^*$, we have $S_d=S_d(N,\eta,p,\Theta_{inf},\Theta_{sup})$.\\

\noindent \underline{STEP 3:} The local continuous $L^{p^*,p}$-Sobolev-Neumann inequalities.\\

Let us explain how to get the local continuous $L^{p^*,p}$-Sobolev-Neumann inequalities. We start by deriving from the strong local $L^p$-Sobolev inequality \eqref{eq:averaged} a $L^p$-Sobolev-type inequality on connected Borel subsets of annuli.

\begin{claim}\label{claim2}
Let $R>0$ and $\alpha>1$. Let $A$ be a connected Borel subset of $B_{\alpha R}(o) \backslash B_R(o)$. For $0 < \delta < 1$, denote by $[A]_{\delta}$ the $\delta$-neighborhood of $A$, i.e. $[A]_{\delta}=\bigcup_{x \in A} B_\delta (x)$. Then there exists a constant $C=C(N,\delta,\alpha,p)>0$ such that for any function $u \in C(X)$ and any upper gradient $g \in L^p([A]_{\delta},\meas)$ of $u$,
$$ \left( \int_{A}|u-u_{A}|^{p^{*}} \di \meas \right)^{1/p^*} \le C \frac{R^p}{V(o,R)^{p/N}} \left( \int_{[A]_{\delta}} g^p \di \meas \right)^{1/p}.$$
\end{claim}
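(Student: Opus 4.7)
The idea is to cover $A$ by finitely many balls $B_i$ contained in $[A]_\delta$, apply the strong local $L^p$-Sobolev inequality of Lemma \ref{lem:averaged} on each of them, and patch the local estimates using the connectedness of $A$. The geometric factor $R/V(o,R)^{1/N}$ comes out via Bishop-Gromov monotonicity, since every point of $A$ has distance to $o$ in $[R,\alpha R]$.

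First, I would choose a scale $\rho>0$ small compared to both $\delta$ and $R$ (for instance $\rho=\min(\delta,R)/20$), and extract a maximal $\rho$-separated family $\{x_i\}_{i=1}^M\subset A$. The balls $B_i:=B_\rho(x_i)$ then cover $A$ and are contained in $[A]_\delta$ (since $x_i\in A$ and $\rho\le\delta$), while doubling gives $M\le M(N,\alpha,\delta)$ together with bounded multiplicity. Moreover, Bishop-Gromov monotonicity of $r\mapsto V(x,r)/r^N$ in $\CD(0,N)$ spaces yields
\[
\frac{V(o,R)}{R^N}\le\frac{V(x_i,(1+\alpha)R)}{R^N}=(1+\alpha)^N\frac{V(x_i,(1+\alpha)R)}{((1+\alpha)R)^N}\le(1+\alpha)^N\frac{V(x_i,\rho)}{\rho^N},
\]
so $\rho/V(x_i,\rho)^{1/N}\le(1+\alpha)\,R/V(o,R)^{1/N}$.

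Second, Lemma \ref{lem:averaged} applied on each $B_i$ gives
\[
\Big(\int_{B_i}|u-u_{B_i}|^{p^{*}}\di\meas\Big)^{1/p^{*}}\le C\,\frac{R}{V(o,R)^{1/N}}\,\Big(\int_{B_i}g^p\di\meas\Big)^{1/p}.
\]
Raising to the $p^{*}$-th power, summing over $i$ and using bounded multiplicity of $\{B_i\}_i$ to absorb the sum into $Q(N,\alpha,\delta)\int_{[A]_\delta}g^p\di\meas$, then taking the $1/p^{*}$-th power, controls the quantity $\big(\sum_i\int_{B_i}|u-u_{B_i}|^{p^{*}}\di\meas\big)^{1/p^{*}}$ by the right-hand side of the claim. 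To replace $u_{B_i}$ by $u_A$, I would use the connectedness of $A$: the intersection graph of $\{B_i\}_i$ is connected (otherwise $A$ would split into two non-empty relatively open pieces), so any $B_i$ can be joined to a reference ball $B_{i_0}$ through a chain of length at most $M-1$ whose consecutive members $B_{j_\ell},B_{j_{\ell+1}}$ lie in a common enlarged ball $\widetilde B_\ell\subset[A]_\delta$ of measure comparable to $V(x_i,\rho)$. H\"older's inequality combined with Lemma \ref{lem:averaged} on $\widetilde B_\ell$ then controls the mean differences $|u_{B_{j_\ell}}-u_{B_{j_{\ell+1}}}|$; telescoping gives a bound on $|u_{B_i}-u_{B_{i_0}}|$, averaging over $A$ gives a bound on $|u_A-u_{B_{i_0}}|$, and the elementary inequality $|u-u_A|^{p^{*}}\le 2^{p^{*}-1}(|u-u_{B_i}|^{p^{*}}+|u_{B_i}-u_A|^{p^{*}})$ on each $B_i$ closes the proof.

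The main obstacle is the combinatorial bookkeeping of the chain argument: ensuring that all the enlarged balls $\widetilde B_\ell$ visited during the telescoping remain inside $[A]_\delta$, that the chain length is absorbed into a constant depending only on $N,\alpha,\delta$, and that the volume ratio between $V(x_i,\rho)$ and $V(o,R)$ is uniformly pinched. Once $\rho$ has been chosen carefully, this bookkeeping is routine, but it requires attention given that $A$ is only assumed to be a connected Borel subset of the annulus, with no further regularity.
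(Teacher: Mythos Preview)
Your strategy is exactly the paper's: cover $A$ by balls at a fixed scale, apply Lemma~\ref{lem:averaged} on each ball, and patch. The paper does the patching by invoking the Neumann patching Theorem~\ref{th:patching2} (building the good covering $(V_i,V_i^*,V_i^\#)=(B(x_i,s),B(x_i,3s),B(x_i,3s))$, checking the local Sobolev--Neumann inequalities via Lemma~\ref{lem:averaged}, and quoting the discrete Poincar\'e--Neumann inequality on the finite graph), whereas you carry out the chain/telescoping argument by hand. These are the same argument, just packaged differently; your direct route is perfectly acceptable.

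There is one genuine gap, and it concerns your choice of scale. You set $\rho=\min(\delta,R)/20$ and then claim $M\le M(N,\alpha,\delta)$. For large $R$ this gives $\rho=\delta/20$, a \emph{fixed} radius, and a standard doubling count yields
\[
M \;\lesssim\; \frac{V(o,\alpha R)}{\min_i V(x_i,\rho/2)} \;\lesssim\; \Big(\frac{R}{\rho}\Big)^{N} \;\approx\; \Big(\frac{R}{\delta}\Big)^{N},
\]
which blows up with $R$. Your chain length is then also unbounded and the constant cannot depend only on $(N,\alpha,\delta,p)$. The fix is to take $\rho$ proportional to $R$, say $\rho=\delta R/20$; then the same doubling count gives $M\le C(N,\alpha,\delta)$ and your argument closes. (The paper does exactly this: it sets the lattice scale $s=\delta R$. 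In fact the definition $[A]_\delta=\bigcup_{x\in A}B_\delta(x)$ in the statement is a slip---as the paper's proof and the subsequent application with $\delta=(1-\kappa^{-1})/2$ make clear, the intended neighbourhood is at scale $\delta R$, and the prefactor should read $R/V(o,R)^{1/N}$ rather than $R^p/V(o,R)^{p/N}$, consistent with Lemma~\ref{lem:averaged}.) With this single change in scale, your Bishop--Gromov comparison, the connectedness-of-the-nerve argument, and the telescoping all go through as you describe.
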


\begin{proof}
Define $s=\delta R$ and choose an $s$-lattice of $A$ (i.e.~a maximal set of points whose distance between two of them is at least $s$) $(x_{j})_{j \in J}$. Set $V_{i}=B(x_{i},s)$ and $V_{i}^{*}=V_{i}^{\#}=B(x_{i},3s)$. Using the doubling condition, there is no difficulty in proving that $(V_{i},V_{i}^{*},V_{i}^{\#})$ is a good covering of $(A,[A]_\delta)$ with respect to $(\meas,\meas)$. A discrete $L^{p^*}$-Poincaré inequality holds on the associated weighted graph, as one can easily check following the lines of \cite[Lem.~2.10]{Minerbe}. The local continuous $L^{p^*,p}$-Sobolev-Neumann inequalities stem from the proof of \cite[Lem.~2.11]{Minerbe}, where we replace (14) there by \eqref{eq:averaged}. Then Theorem \ref{th:patching2} gives the result.
\end{proof}

Let us prove that Claim \ref{claim2} implies the local continuous $L^{p^*,p}$-Sobolev-Neumann inequalities with a constant $S_c$ depending only on $N$, $\eta$ and $p$. Take a piece of the good covering $U_{i,a}$. Choose $\delta=(1-\kappa^{-1})/2$ so that $[U_{i,a}]_{\delta} \subset U_{i,a}^{*}$. Take a function $u \in C(X)$ and an upper gradient $g\in L^p([U_{i,a}]_\delta, \meas)$ of $u$. Since $|u-\langle u \rangle_{U_{i,a}}| \le |u - c| + |c - \langle u \rangle_{U_{i,a}}|$ for any $c\in\R$, convexity of $t\mapsto |t|^{p^*}$ and Hölder's inequality imply
\begin{align*}
\int_{U_{i,a}}|u-\langle u \rangle_{U_{i,a}}|^{p^*} \di \mu & \le 2^{p^*-1}\int_{U_{i,a}}(|u - c|^{p^*} + |c - \langle u \rangle_{U_{i,a}}|^{p^*}) \di \mu\\
& \le 2^{p^*} \inf_{c \in \R} \int_{U_{i,a}}|u-c|^{p^*} \di\mu \le 2^{p^*} \int_{U_{i,a}}|u-u_{U_{i,a}}|^{p^*} w_o \di \meas.
\end{align*}
As $w_o$ is a radial function, we can set $\bar{w}_o (r) := w_o(x)$ for any $r>0$ and any $x \in X$ such that $\dist(o,x)=r$.  Note that by the Bishop-Gromov theorem, $\bar{w}_o$ is a decreasing function, so
\[
\int_{U_{i,a}}|u-\langle u \rangle_{U_{i,a}}|^{p^*} \di \mu \le 2^{p^*} \bar{w}_o(\kappa^{i-1}) \int_{U_{i,a}} |u - u_{U_{i,a}}|^{p^*} \di \meas.
\]
Applying Claim \ref{claim2} with $A = U_{i,a}$, $R=\kappa^{i-1}$ and $\alpha=\kappa^2$ yields to
\begin{align*}
\int_{U_{i,a}}|u-\langle u \rangle_{U_{i,a}}|^{p^*} \di \mu &\le  2^{p^*}  \bar{w}_o(\kappa^{i-1}) \, C  \, \frac{\kappa^{p p^{*}(i-1)}}{V(o,\kappa^{i-1})^{p p^*/N}} \left( \int_{U_{i,a}^{*}} g^p \di \meas \right)^{p^*/p} \\
& \le C \left( \int_{U_{i,a}^{*}} g^p \di \meas \right)^{p^*/p} 
\end{align*}
where we used the same letter $C$ to denote different constants depending only on $N$, $p$, and $\kappa$. As $\kappa$ depends only on $N$, $\eta$ and $p$, we get the result.


An analogous argument implies the inequalities between levels 2 and 3.\\

\noindent \underline{STEP 4:} Conclusion.\\

Apply Theorem \ref{th:patching} to get the result.

\section{Weighted Nash inequality and bound of the corresponding heat kernel}

In this section, we deduce from Theorem \ref{th:weightedSobolev} a weighted Nash inequality. We use this result in the context of $\RCD(0,N)$ spaces to get a uniform bound on a corresponding weighted heat kernel.



\begin{theorem}[Weighted Nash inequality]\label{th:weightedNash}
Let $(X,\dist,\meas)$ be a $\CD(0,N)$ space with $N> 2$ satisfying $\eqref{eq:growthcondition}$ with $\eta>2$. Then there exists a constant $C_{Na}=C_{Na}(N,\eta,\Theta_{inf},\Theta_{sup})>0$ such that: 
\[
\|u\|_{L^2(X,\mu)}^{2+\frac{4}{N}} \le C_{Na} \|u\|_{L^1(X,\mu)}^{\frac{4}{N}} \Ch(u) \qquad \forall \, u \in L^{1}(X,\mu) \cap H^{1,2}(X,\dist,\meas),
\]
where $\mu\ll \meas$ has density $w_o= V(o,\dist(o,\cdot))^{2/(N-2)} \dist(o,\cdot)^{-2N/(N-2)}$.
\end{theorem}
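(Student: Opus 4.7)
The plan is to derive the weighted Nash inequality from Theorem~\ref{th:weightedSobolev} by a standard Hölder interpolation between $L^1(\mu)$ and $L^{2^*}(\mu)$, where $2^* = 2N/(N-2)$. Since $N > 2$ and $\eta > 2$, the exponent $p = 2$ lies in the admissible range $[1,\eta)$ of Theorem~\ref{th:weightedSobolev}, and the resulting weight coincides with the one defined in the statement of Theorem~\ref{th:weightedNash}. Applying Theorem~\ref{th:weightedSobolev} with $p = 2$ therefore yields a constant $C_S = C_S(N,\eta,\Theta_{inf},\Theta_{sup}) > 0$ such that
$$
\|u\|_{L^{2^*}(X,\mu)} \le C_S \, \|g\|_{L^{2}(X,\meas)}
$$
for every continuous function $u : X \to \R$ admitting an upper gradient $g \in L^2(X,\meas)$.

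The first step is to extend this estimate from continuous functions with upper gradients to arbitrary $u \in H^{1,2}(X,\dist,\meas)$, with the minimal relaxed slope $|\nabla u|_*$ in place of $g$. Using the fact that $\Ch$ can be obtained by relaxing slopes of bounded Lipschitz functions, one picks a sequence $(u_n) \subset \Lip(X) \cap L^2(X,\meas)$ with $u_n \to u$ in $L^2(\meas)$ and $\int_X |\nabla u_n|^2 \di \meas \to \Ch(u)$. Since the slope $|\nabla u_n|$ is an upper gradient of the continuous function $u_n$, the Sobolev inequality above provides a uniform bound of $(u_n)$ in $L^{2^*}(\mu)$. Extracting a subsequence converging $\meas$-a.e.\ (hence $\mu$-a.e., since $\mu \ll \meas$) to $u$ and applying Fatou's lemma gives
$$
\|u\|_{L^{2^*}(X,\mu)}^{2} \le C_S^{2} \, \Ch(u) \qquad \forall u \in H^{1,2}(X,\dist,\meas).
$$

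The second step is elementary Hölder interpolation: the identity $1/2 = \theta + (1-\theta)/2^*$ solves to $\theta = 2/(N+2)$, so that for every $u \in L^1(X,\mu) \cap H^{1,2}(X,\dist,\meas)$,
$$
\|u\|_{L^2(\mu)} \le \|u\|_{L^1(\mu)}^{2/(N+2)} \, \|u\|_{L^{2^*}(\mu)}^{N/(N+2)}.
$$
Raising both sides to the power $2 + 4/N = 2(N+2)/N$ yields
$$
\|u\|_{L^2(\mu)}^{2+4/N} \le \|u\|_{L^1(\mu)}^{4/N} \, \|u\|_{L^{2^*}(\mu)}^{2},
$$
and substituting the extended Sobolev estimate from the first step closes the proof with $C_{Na} = C_S^{2}$.

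The main obstacle is the first step: one has to ensure that the Sobolev inequality, initially stated for continuous functions with upper gradients, descends to the full Sobolev class $H^{1,2}$ with the minimal relaxed slope $|\nabla u|_*$. This requires selecting the Lipschitz approximating sequence so that it simultaneously converges to $u$ in $L^2(\meas)$ and $\meas$-almost everywhere along a subsequence, with asymptotically sharp control of the Dirichlet energy. Once this is carried out, the Hölder interpolation and the final bound are purely algebraic.
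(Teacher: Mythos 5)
Your proof is correct and follows essentially the same route as the paper: Theorem \ref{th:weightedSobolev} with $p=2$ (admissible since $\eta>2$), H\"older interpolation with $\theta=2/(N+2)$, and the relaxation Lemma \ref{lem:standardlemma} to pass from Lipschitz functions with slopes as upper gradients to general $u\in H^{1,2}(X,\dist,\meas)$. Your ordering --- first extending the Sobolev inequality to all of $H^{1,2}$ by Fatou along a $\meas$-a.e.\ convergent subsequence, then interpolating directly on the limit function --- is in fact slightly tidier than the paper's reduction ``it suffices to prove the result for $u\in\Lip_{bs}(X)$'', since it never requires controlling $\|u_n\|_{L^1(\mu)}$ or $\|u_n\|_{L^2(\mu)}$ along the approximating sequence (only make sure you take the sequence in $\Lip_{bs}(X)$, as in Lemma \ref{lem:standardlemma}, so that the slopes lie in $L^2(X,\meas)$ and Theorem \ref{th:weightedSobolev} applies).
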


To prove this theorem, we need a standard lemma which states that the relaxation procedure defining $\Ch$ can be performed with slopes of Lipschitz functions with bounded support (we write $\Lip_{bs}(X)$ in the sequel for the space of such functions) instead of upper gradients of $L^2$-functions. We omit the proof for brevity and refer to the paragraph after Propositon 4.2 in \cite{AmbrosioColomboDiMarino} for a discussion on this result. Note that here and until the end of this section we write $L^p(\meas)$, $L^p(\mu)$ instead of $L^p(X,\meas)$, $L^p(X,\mu)$ respectively for any $1 \le p \le +\infty$.

\begin{lemma}\label{lem:standardlemma}
Let $(X,\dist,\meas)$ be a complete and separable metric measure space, and $u \in H^{1,2}(X,\dist,\meas)$. Then $$\Ch(u)=\inf \left\{ \liminf\limits_{n \to \infty} \int_{X} |\nabla u_{n}|^2 \di \meas : (u_{n})_{n} \subset \Lip_{bs}(X), \|u_{n} - u\|_{L^2(\meas)}\rightarrow 0 \right\}. $$
In particular, for any $u \in H^{1,2}(X,\dist,\meas)$, there exists a sequence $(u_n)_n \subset \Lip_{bs}(X)$ such that $\|u-u_n\|_{L^2(\meas)} \to 0$ and $\||\nabla u_n| \|_{L^2(\meas)}^2 \to \Ch(u)$ when $n \to +\infty$.
\end{lemma}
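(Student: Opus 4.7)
The plan is to split the identity into the two natural inequalities. The bound $\Ch(u) \le \inf\{\,\cdot\,\}$ is essentially built into the definition of $\Ch$: for any $(u_n) \subset \Lip_{bs}(X)$ converging to $u$ in $L^2(\meas)$, the slope $|\nabla u_n|$ is a Borel upper gradient of $u_n$ (the integral inequality along rectifiable curves is immediate from the very definition of the slope together with the Lipschitz character of $u_n$), so the pair $(u_n, |\nabla u_n|)$ is admissible in the Cheeger relaxation. Thus $\Ch(u) \le \liminf_n \int_X |\nabla u_n|^2 \di \meas$, and passing to the infimum over such sequences yields this direction.

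\textbf{Recovery sequence in $\Lip \cap L^2$.} The reverse direction $\inf\{\,\cdot\,\} \le \Ch(u)$ requires producing a Lipschitz recovery sequence with bounded support whose squared slopes achieve $\Ch(u)$ in the limit. I would first find $v_n \in \Lip(X) \cap L^2(X,\meas)$ with $v_n \to u$ in $L^2(\meas)$ and $\int_X |\nabla v_n|^2 \di \meas \to \Ch(u)$. The $L^2$-convergence is built into the definition of $H^{1,2}(X,\dist,\meas)$ as the closure of $\Lip(X) \cap L^2(X,\meas)$ under $\|\cdot\|_{H^{1,2}}$; the harder point is matching the slope energies with $\Ch(u)$, which amounts to identifying $\Ch$ with the $L^2$-lower semicontinuous envelope of $v \mapsto \int_X |\nabla v|^2 \di \meas$ on $\Lip(X) \cap L^2(X,\meas)$. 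This identification is precisely the content recalled in \cite{AmbrosioColomboDiMarino}: the one-sided bound $\Ch(v) \le \int_X |\nabla v|^2 \di \meas$ for $v \in \Lip \cap L^2$ comes from slopes being upper gradients, and the matching lower bound is obtained by a Hopf--Lax or discrete-Lipschitz approximation argument applied to a general recovery sequence for $\Ch(u)$.

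\textbf{Truncation and diagonal extraction.} To pass from $v_n \in \Lip(X) \cap L^2(X,\meas)$ to a sequence in $\Lip_{bs}(X)$ without paying in slope energy, I would use radial cutoffs $\eta_k(x) := \min\{1,\, k^{-1}(2k - \dist(o,x))_+\}$. Each $\eta_k$ lies in $\Lip_{bs}(X)$, equals $1$ on $B_k(o)$, vanishes outside $B_{2k}(o)$, and satisfies $|\nabla \eta_k| \le 1/k$. Then $\eta_k v_n$ is Lipschitz with support in $B_{2k}(o)$, hence in $\Lip_{bs}(X)$, and dominated convergence gives $\eta_k v_n \to v_n$ in $L^2(\meas)$ as $k \to \infty$. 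The Leibniz inequality $|\nabla(\eta_k v_n)| \le \eta_k |\nabla v_n| + |v_n|\,|\nabla \eta_k|$ together with the bound
\[
\int_X v_n^2 |\nabla \eta_k|^2 \di \meas \le \frac{1}{k^2} \|v_n\|_{L^2(\meas)}^2,
\]
which tends to $0$ as $k \to \infty$, and Cauchy--Schwarz on the cross term yield $\limsup_{k\to\infty} \int_X |\nabla(\eta_k v_n)|^2 \di \meas \le \int_X |\nabla v_n|^2 \di \meas$. A diagonal extraction then produces $w_n := \eta_{k(n)} v_n \in \Lip_{bs}(X)$ with $w_n \to u$ in $L^2(\meas)$ and $\liminf_n \int_X |\nabla w_n|^2 \di \meas \le \Ch(u)$, closing the converse inequality; the ``in particular'' assertion just extracts a minimizing sequence from the resulting infimum. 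The hard part is the relaxation identity of the previous paragraph --- the remaining steps are routine chain-rule, dominated-convergence, and diagonal bookkeeping.
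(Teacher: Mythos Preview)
Your argument is correct and follows essentially the same route as the paper: the easy inequality via slopes being upper gradients, the identification of $\Ch$ with the relaxation of $\int |\nabla \cdot|^2$ on $\Lip \cap L^2$ (for which the paper likewise defers to \cite{AmbrosioColomboDiMarino}), a cutoff to pass from $\Lip \cap L^2$ to $\Lip_{bs}$, and a diagonal extraction. The only cosmetic difference is your choice of cutoffs with $|\nabla \eta_k|\le 1/k$, which lets you handle the cross term directly by Cauchy--Schwarz; the paper's (suppressed) argument uses cutoffs with bounded slope and absorbs the cross term via Young's inequality with parameter $\eps \to 0$ instead.
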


We are now in a position to prove Theorem \ref{th:weightedNash}.

\begin{proof}
By the previous lemma it is sufficient to prove the result for $u \in \Lip_{bs}(X)$. By Hölder's inequality, $$\|u\|_{L^2(\mu)} \le \|u\|_{L^1(\mu)}^{\theta}\|u\|_{L^{2^*}(\mu)}^{1-\theta}$$ where $\frac{1}{2} = \frac{\theta}{1} + \frac{1-\theta}{2^*}$ i.e. $\theta = \frac{2}{N+2}$. Then by Theorem \ref{th:weightedSobolev} applied in the case $p = 2 < \eta$, $$\|u\|_{L^{2}(\mu)} \le C \|u\|_{L^1(\mu)}^{\frac{2}{N+2}} \||\nabla u|\|_{L^{2}(\meas)}^{\frac{N}{N+2}}.$$
It follows from the identification between slopes and minimal relaxed gradients established in \cite[Theorem 5.1]{Cheeger} that $\Ch(u) = \||\nabla u|\|_{L^{2}(\meas)}^2$, so the result follows by raising the previous inequality to the power $2(N+2)/N$.
\end{proof}

Let us consider now a $\RCD(0,N)$ space $(X,\dist,\meas)$ satisfying the growth condition \eqref{eq:growthcondition} for some $\eta>2$ and the uniform local $N$-Ahlfors regularity property :
\begin{equation}\label{eq:Ahlfors}
C_o^{-1} \le \frac{V(x,r)}{r^N} \le C_o \qquad \forall x \in X, \, \, \, \forall \, 0 < r < r_o
\end{equation}
for some $C_o>1$ and $r_o>0$. Such spaces are called \textit{weakly non-collapsed} according to the terminology introduced by Gigli and De Philippis in \cite{GigliDeP}. Note that it follows from \cite{BruéSemola} that $N$ is an integer which coincides with the essential dimension of $(X,\dist,\meas)$.

We take the weight $w_o = V(o,\dist(o,\cdot))^{2/(N-2)} \dist(o,\cdot)^{-2N/(N-2)}$ which corresponds to the case $p=2$ in Theorem \ref{th:weightedSobolev}. Note that \eqref{eq:Ahlfors} together with Bishop-Gromov's theorem implies that $w_o$ is bounded from above by $C_o^{2/(N-2)}$, thus $L^2(\meas) \subset L^2(\mu)$.

Set $H^{1,2}_{loc}(X,\dist,\meas) = \{f \in L^2_{loc}(\meas) \, : \, \phi f \in H^{1,2}(X,\dist,\meas) \, \, \, \, \forall \phi \in \Lip_{bs}(X) \}$ and note that as an immediate consequence of \eqref{eq:Ahlfors} combined  with Bishop-Gromov's theorem, $w_o$ is bounded from above and below by positive constants on any compact subsets of $X$, thus $f \in L^2_{loc}(\meas)$ if and only if $f \in L^2_{loc}(\mu)$.


Define a Dirichlet form $Q$ on $L^2(\mu)$ as follows. Set
$$\mathcal{D}(Q):=\{f \in L^2(\mu) \cap H^{1,2}_{loc}(X,\dist,\meas) : |\nabla f|_* \in L^2(\meas) \}$$
and
$$
Q(f) = \begin{cases}
\int_X |\nabla f|^2_* \di \meas & \text{if $f \in \cD(Q)$},\\
+ \infty & \text{otherwise.}
\end{cases}
$$
$Q$ is easily seen to be convex. Let us show that it is a $L^2(\mu)$-lower semicontinuous functional on $L^2(\mu)$. Let $\{f_n\}_n \subset \mathcal{D}(Q)$ and $f \in L^2(\mu)$ be such that $\|f_n - f\|_{L^2(\mu)}\to 0$. Let $K \subset X$ be a compact set. For any $i \in \N\backslash \{0\}$, set
$$
\phi_i(\cdot) = \max(0,1-(1/i)\dist(\cdot,K))
$$
and note that $\phi_i \in \Lip_{bs}(X)$, $0 \le \phi_i \le 1$, $\phi_i \equiv 1$ on $K$ and $|\nabla \phi_i|_* \le (1/i)$. Then for any $i$, the sequence $\{\phi_i f_n\}_n$ converges to $\phi_i f$ in $L^2(\meas)$. The $L^2(\meas)$-lower semicontinuity of the Cheeger energy and the chain rule for the slope imply
\begin{align*}
\int_K|\nabla f|_*^2 \di \meas & \le  \int_X|\nabla (\phi_i f)|_*^2 \di \meas \le \liminf_{n} \int_X|\nabla (\phi_i f_n)|_*^2 \di \meas \\
& \le \liminf_{n} \int_X|\nabla f_n|_*^2 \di \meas + \frac{2}{i} \liminf_{n} \int_X f_n |\nabla f_n|_* \di \meas + \frac{1}{i^2} \liminf_{n} \int_X f_n^2 \di \meas.
\end{align*}
Letting $i$ tend to $+\infty$, then letting $K$ tend to $X$, yields the result.

Then we can apply the general theory of gradient flows to define the semigroup $(h_{t}^{\mu})_{t>0}$ associated to $Q$ which is characterized by the property that for any $f \in L^2(X,\mu)$, $t \rightarrow h_{t}^{\mu}f$ is locally absolutely continuous on $(0,+\infty)$ with values in $L^2(X,\mu)$, and
\[
\frac{\di}{\di t}h_{t}^{\mu} f= - A h_{t}^{\mu} f \quad \text{for $\mathcal{L}^1$-a.e. $t\in (0,+\infty)$},
\]
where the self-adjoint operator $-A$ associated to $Q$ is defined on a dense subset $\cD(A)$ of $\cD(Q)=\{Q<+\infty\}$ and characterized by:
\[
Q(f,g)=\int_{X}(Af)g \di \mu \qquad \forall f \in \cD(A), \, \forall g \in \cD(Q).
\] 
Be aware that although $Q$ is defined by integration with respect to $\meas$, it is a Dirichlet form on $L^2(\mu)$, whence the involvement of $\mu$ in the above characterization.

Note that by the Markov property, each $h_{t}^{\mu}$ can be uniquely extended from $L^{2}(X,\mu) \cap L^{1}(X,\mu)$ to a contraction from $L^{1}(X,\mu)$ to itself.

We start with a preliminary lemma stating that a weighted Nash inequality also holds on the appropriate functional space when $\Ch$ is replaced by $Q$.

\begin{lemma}\label{lem:NashQ}
Let $(X,\dist,\meas)$ be a $\RCD(0,N)$ space with $N> 3$ satisfying \eqref{eq:growthcondition} and \eqref{eq:Ahlfors} for some $\eta>2$, $C_o>1$ and $r_o>0$. Then there exists a constant $C=C(N,\eta,\Theta_{inf},\Theta_{sup})>0$ such that:
$$
\|u\|_{L^2(\mu)}^{2 + \frac{4}{N}} \le C \|u\|_{L^1(\mu)}^{\frac{4}{N}} Q(u) \qquad \forall u \in L^1(\mu) \cap \mathcal{D}(Q).
$$
\end{lemma}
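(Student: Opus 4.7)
The plan is to reduce the inequality for $Q$ to the already-proven Nash inequality of Theorem \ref{th:weightedNash} by a cutoff argument. Given $u\in L^1(\mu)\cap\mathcal{D}(Q)$, we truncate $u$ spatially by the family of $1$-Lipschitz cutoffs
\[
\phi_i(x):=\max\!\Bigl(0,\,1-\tfrac{1}{i}\max(0,\dist(o,x)-i)\Bigr),\qquad i\in\N\setminus\{0\},
\]
so that $\phi_i\in\Lip_{bs}(X)$, $\phi_i\equiv 1$ on $B_i(o)$, $\supp(\phi_i)\subset\overline{B_{2i}(o)}$, $0\le \phi_i\le 1$ and $|\nabla\phi_i|\le 1/i$. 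Since by assumption $u\in H^{1,2}_{loc}(X,\dist,\meas)$, the very definition of this space yields $\phi_i u\in H^{1,2}(X,\dist,\meas)$. Moreover $|\phi_i u|\le |u|\in L^1(\mu)$, so $\phi_i u\in L^1(\mu)\cap H^{1,2}(X,\dist,\meas)$ and Theorem \ref{th:weightedNash} applies:
\[
\|\phi_i u\|_{L^2(\mu)}^{2+\frac{4}{N}}\le C_{Na}\|\phi_i u\|_{L^1(\mu)}^{\frac{4}{N}}\,\Ch(\phi_i u).
\]

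The two norms on the left and on the right-hand coefficient converge to $\|u\|_{L^2(\mu)}$ and $\|u\|_{L^1(\mu)}$ respectively by dominated convergence. The crux of the proof is therefore to show that
\[
\limsup_{i\to\infty}\Ch(\phi_i u)\le Q(u).
\]
Using the chain rule for the minimal relaxed slope together with Young's inequality, we get for any $\eps>0$
\[
\Ch(\phi_i u)\le (1+\eps)\int_X\phi_i^2|\nabla u|_*^2\di\meas+\Bigl(1+\tfrac{1}{\eps}\Bigr)\int_X u^2|\nabla\phi_i|_*^2\di\meas.
\]
The first term tends to $Q(u)$ by dominated convergence (dominated by $|\nabla u|_*^2\in L^1(\meas)$ since $u\in\mathcal{D}(Q)$). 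Taking $\limsup$ then $\eps\to 0$ reduces the matter to proving that the second term vanishes in the limit.

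This is where the growth hypothesis $\eta>2$ comes in. Since $|\nabla\phi_i|_*\le 1/i$ is supported on the annulus $A_i:=B_{2i}(o)\setminus B_i(o)$, and since for $r\in[i,2i]$ large enough the growth condition \eqref{eq:growthcondition} and the Bishop–Gromov monotonicity give $w_o(x)\ge c\,r^{-2(N-\eta)/(N-2)}$ for $x$ with $\dist(o,x)=r$, we obtain
\[
\frac{1}{i^2}\int_{A_i}u^2\di\meas=\frac{1}{i^2}\int_{A_i}u^2\,w_o^{-1}\di\mu\le C\,i^{\frac{2(N-\eta)}{N-2}-2}\|u\|_{L^2(\mu)}^2=C\,i^{-\frac{2(\eta-2)}{N-2}}\|u\|_{L^2(\mu)}^2,
\]
and the exponent is strictly negative precisely because $\eta>2$. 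Hence the second term goes to $0$ as $i\to\infty$. Combining these three convergences with the Nash inequality written above and taking $\limsup_i$ yields the claim with $C=C_{Na}$.

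The main (in fact only) obstacle is controlling the cutoff error $\int_X u^2|\nabla\phi_i|_*^2\di\meas$: since $u$ is only assumed to lie in $L^2(\mu)$, not $L^2(\meas)$, this bound is non-trivial and crucially relies on the decay of the weight $w_o^{-1}$ at infinity provided by \eqref{eq:growthcondition} together with the quantitative strict inequality $\eta>2$. Everything else is a routine approximation argument.
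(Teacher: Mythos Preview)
Your proof is correct and follows the same cut-off-and-pass-to-the-limit strategy as the paper: multiply $u$ by compactly supported Lipschitz cutoffs, apply the weighted Nash inequality of Theorem~\ref{th:weightedNash}, and let the cutoff exhaust $X$. Two small differences are worth noting. First, you apply Theorem~\ref{th:weightedNash} directly to $\phi_i u\in L^1(\mu)\cap H^{1,2}(X,\dist,\meas)$, whereas the paper inserts an additional approximation of $\chi_n u$ by Lipschitz functions with bounded support before invoking the Nash inequality; your shortcut is legitimate because Theorem~\ref{th:weightedNash} is already stated for general $H^{1,2}$ functions. Second, and more importantly, the paper disposes of the cross term $\int_X u^2|\nabla\chi_n|_*^2\di\meas$ by appealing to ``an argument similar to the proof of Lemma~\ref{lem:standardlemma}''; that argument, however, uses $u\in L^2(\meas)$, which is not part of the hypothesis $u\in\mathcal D(Q)$. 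Your quantitative estimate $w_o^{-1}\lesssim r^{2(N-\eta)/(N-2)}$ combined with the scaling $|\nabla\phi_i|_*\le 1/i$ makes the vanishing of the error term explicit and shows exactly where the assumption $\eta>2$ enters. (The invocation of Bishop--Gromov is not needed for the lower bound on $w_o$; the growth hypothesis \eqref{eq:growthcondition} alone gives $V(o,r)\ge c\,r^\eta$ for large $r$.)
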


\begin{proof}
Let $u \in  L^1(\mu) \cap \mathcal{D}(Q)$. Then $u \in L^2_{loc}(\meas)$, $\phi u \in H^{1,2}(X,\dist,\meas)$ for any $\phi \in \Lip_{bs}(X)$ and $|\nabla u|_* \in L^2(\mu)$. In particular, if we take $(\chi_n)_n$ as in the proof of Lemma \ref{lem:standardlemma}, for any $n \in \N$ we get that $\chi_n u \in H^{1,2}(X,\dist,\meas)$ and consequently there exists a sequence $(u_{n,k})_k \subset \Lip_{bs}(X)$ such that $u_{n,k} \to \chi_n u$ in $L^2(\meas)$ and $\int_X |\nabla u_{n,k}|^2 \di \meas \to \int_X |\nabla (\chi_n u)|_*^2 \di \meas$. Apply Theorem \ref{th:weightedNash} to the functions $u_{n,k}$ to get
\begin{equation}\label{eq:41}
\|u_{n,k}\|_{L^2(\mu)}^{2+\frac{4}{N}} \le C \|u_{n,k}\|_{L^1(\mu)}^{\frac{4}{N}} \int_X |\nabla u_{n,k}|^2 \di \meas
\end{equation}
for any $k \in \N$. As the $u_{n,k}$ and $\chi_n u$ have bounded support, and thanks to \eqref{eq:Ahlfors} which ensures boundedness of $w_o$, the $L^2(\meas)$ convergence $u_{n,k} \to \chi_n u$ is equivalent to the $L^2_{loc}(\meas)$, $L^2_{loc}(\mu)$, $L^2(\mu)$ and $L^1(\mu)$ convergences. Therefore, passing to the limit $k \to +\infty$ in \eqref{eq:41}, we get
$$
\|\chi_n u\|_{L^2(\mu)}^{2+\frac{4}{N}} \le C \|\chi_n u\|_{L^1(\mu)}^{\frac{4}{N}} \int_X |\nabla (\chi_n u)|_*^2 \di \meas.
$$
By an argument similar to the proof of Lemma \ref{lem:standardlemma}, we can show that
$$ 
\limsup\limits_{n \to +\infty} \int_X |\nabla (\chi_n u)|_*^2 \di \meas \le \int_X |\nabla u|_*^2 \di \meas.
$$
And monotone convergence ensures that $\|\chi_n u\|_{L^2(\mu)} \to \|u\|_{L^2(\mu)}$ and $\|\chi_n u\|_{L^1(\mu)} \to \|u\|_{L^1(\mu)}$, whence the result.
\end{proof}

Let us apply Lemma \ref{lem:NashQ} to get a bound on the heat kernel of $Q$.

\begin{theorem}[Bound of the weighted heat kernel]\label{th:boundweightedheatkernel}
Let $(X,\dist,\meas)$ be a $\RCD(0,N)$ space with $N>3$ satisfying the growth condition \eqref{eq:growthcondition} for some $\eta>2$ and the uniform local $N$-Ahlfors regular property \eqref{eq:Ahlfors} for some $C_o>1$ and $r_o>0$. Then there exists $C=C(N,\eta,\Theta_{inf},\Theta_{sup})>0$ such that
\begin{equation}\label{eq:111}
\|h_{t}^{\mu}\|_{L^1(\mu) \rightarrow L^{\infty}(\mu)} \le \frac{C}{t^{N/2}}, \qquad \forall t>0.
\end{equation}
Moreover, for any $t>0$, $h_{t}^{\mu}$ admits a kernel $p_{t}^{\mu}$ with respect to $\mu$ such that for some $C=C(N,\eta,\Theta_{inf},\Theta_{sup})>0$,
\begin{equation}\label{eq:222}
p_{t}^{\mu}(x,y) \le \frac{C}{t^{N/2}} \qquad \forall x, y \in X.
\end{equation}
\end{theorem}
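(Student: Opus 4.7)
The plan is to run the classical Nash argument (cf.~\cite{BakryCoulhonLedouxSaloff-Coste}) using Lemma \ref{lem:NashQ} as the functional-analytic input, and then to extract a kernel via Dunford-Pettis. First take $f\in L^1(\mu)\cap L^2(\mu)$ and set $\phi(t):=\|h_t^\mu f\|_{L^2(\mu)}^2$. The general theory of gradient flows on Hilbert spaces (applied here to the Dirichlet form $Q$ on $L^2(\mu)$) gives $h_t^\mu f\in \cD(A)\subset \cD(Q)$ for every $t>0$ and
\[
\phi'(t)=-2\, Q(h_t^\mu f)\qquad\text{for a.e.~}t>0.
\]
Since $Q$ is a Dirichlet form, the Markov property yields an $L^1(\mu)$-contraction extension of $h_t^\mu$, hence $\|h_t^\mu f\|_{L^1(\mu)}\le\|f\|_{L^1(\mu)}$. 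Applying Lemma \ref{lem:NashQ} to $h_t^\mu f\in L^1(\mu)\cap \cD(Q)$, I obtain the differential inequality
\[
\phi(t)^{1+\frac{2}{N}}\le C\,\|f\|_{L^1(\mu)}^{4/N}\,Q(h_t^\mu f)=-\frac{C}{2}\,\|f\|_{L^1(\mu)}^{4/N}\,\phi'(t).
\]
Integrating this Bernoulli ODE from $0$ to $t$ (and using $\phi(0)\le+\infty$ which drops out in the limit) yields $\phi(t)\le C'\,\|f\|_{L^1(\mu)}^2\, t^{-N/2}$, i.e.~$\|h_t^\mu\|_{L^1(\mu)\to L^2(\mu)}\le C''\, t^{-N/4}$ after passing to general $f\in L^1(\mu)$ by density.

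Next I exploit self-adjointness of $h_t^\mu$ on $L^2(\mu)$ (which holds since the associated operator $A$ is self-adjoint): by duality $\|h_t^\mu\|_{L^2(\mu)\to L^\infty(\mu)}\le C''\, t^{-N/4}$. Writing $h_{2t}^\mu=h_t^\mu\circ h_t^\mu$ and composing the two bounds produces
\[
\|h_{2t}^\mu\|_{L^1(\mu)\to L^\infty(\mu)}\le \|h_t^\mu\|_{L^2(\mu)\to L^\infty(\mu)}\,\|h_t^\mu\|_{L^1(\mu)\to L^2(\mu)}\le (C'')^2\, t^{-N/2},
\]
which is the ultracontractivity estimate \eqref{eq:111} after a trivial rescaling of the time variable and renaming of the constant.

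Finally, having an $L^1(\mu)\to L^\infty(\mu)$ bound for a positivity-preserving self-adjoint semigroup on the $\sigma$-finite measure space $(X,\mu)$ allows one to invoke the Dunford-Pettis theorem: for each $t>0$ there exists a measurable function $p_t^\mu:X\times X\to[0,+\infty]$ such that $h_t^\mu f(x)=\int_X p_t^\mu(x,y)f(y)\,\di\mu(y)$ for $\mu$-a.e.~$x$ and every $f\in L^1(\mu)$, and such that $\|p_t^\mu\|_{L^\infty(\mu\otimes\mu)}\le C\, t^{-N/2}$. Replacing $p_t^\mu$ by a good representative (equivalently, truncating at the essential supremum on the negligible set where the pointwise bound may fail) yields \eqref{eq:222}. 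The main technical subtlety, which deserves care, is justifying that the Nash argument is legitimate: one must check that $h_t^\mu f\in L^1(\mu)\cap\cD(Q)$ for a.e.~$t$ so that Lemma \ref{lem:NashQ} applies, and that $t\mapsto\phi(t)$ is absolutely continuous with the stated derivative. Both follow from the general theory of gradient flows applied to the convex lower semicontinuous functional $Q$, combined with the Markov property of $Q$ that turns the $L^2(\mu)$-flow into an $L^1(\mu)$-contraction.
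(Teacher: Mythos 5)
Your proposal is correct and follows essentially the same route as the paper: reduce to the Nash-type differential inequality for $\phi(t)=\|h_t^\mu f\|_{L^2(\mu)}^2$ via Lemma \ref{lem:NashQ} and the $L^1(\mu)$-contraction property, integrate to get $\|h_t^\mu\|_{L^1(\mu)\to L^2(\mu)}\le C t^{-N/4}$, then use self-adjointness, duality and the semigroup law to obtain \eqref{eq:111}. The only cosmetic difference is in the last step, where the paper invokes \cite[Th.~(3.25)]{CKS} to get the kernel and the bound \eqref{eq:222} directly from the Nash inequality, while you derive them from ultracontractivity via Dunford--Pettis and a choice of representative; both are standard and equally valid here.
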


To prove this theorem we follow closely the lines of \cite[Th.~4.1.1]{Saloff-Coste}. The constant $C$ may differ from line to line, note however that it will always depend only on $\eta$, $N$, $\Theta_{inf}$ and $\Theta_{sup}$.

\begin{proof}
Let $u \in L^1(\mu)$ be such that $\|u\|_{L^1(\mu)} = 1$. Let us show that $\|h_{t}^{\mu}u\|_{L^2(\mu)} \le C t^{-N/4}$ for any $t>0$. First of all, by density of $\Lip_{bs}(X)$ in $L^1(\mu)$, we can assume $u \in \Lip_{bs}(X)$ with $\|u\|_{L^1(\mu)} = 1$. Furthermore, since for any $t>0$, the Markov property ensures that the operator $h_t^\mu : L^1(\mu) \cap L^2(\mu) \to \mathcal{D}(Q)$ extends uniquely to a contraction oerator from $L^1(\mu)$ to itself, we have $h_t^\mu u \in L^1(\mu) \, (\cap \, \mathcal{D}(Q))$ and $\|h_t^{\mu}u\|_{L^1(\mu)} \le 1$. Therefore, we can apply Lemma \ref{lem:NashQ} to get:
$$
\|h_t^\mu u\|_{L^2(\mu)}^{2+\frac{4}{N}} \le C Q(h_t^\mu u) \qquad \forall t >0.
$$
As
$ \displaystyle
\int_X |\nabla h_t^\mu u|_*^2 \di \meas = \int_{X} (A h_{t}^{\mu}u) h_{t}^{\mu}u \di \mu = - \int_{X} \left( \frac{\di}{\di t} h_{t}^{\mu}u \right) h_{t}^{\mu}u \di \mu = - \frac{1}{2} \frac{\di}{\di t} \| h_{t}^{\mu}u \|_{L^2(\mu)}^2,
$
we finally end up with the following differential inequality:
$$
\|h_{t}^{\mu}u\|_{L^2(\mu)}^{2 + 4/N} \le - \frac{C}{2} \frac{\di}{\di t} \| h_{t}^{\mu}u \|_{L^2(\mu)}^2 \qquad \forall t>0.
$$
Writing $\phi(t) = \|h_{t}^{\mu}u\|_{L^2(\mu)}^2$ and $\psi(t)=\frac{N}{2} \phi(t)^{-2/N}$ for any $t>0$, we get $\frac{2}{C} \le \psi'(t)$ and thus $\frac{2}{C} t \le \psi(t) - \psi(0)$. As $\psi(0)=\frac{N}{2}\|u\|_{L^2(\mu)}^{-4/N} \ge 0$, we obtain $\frac{2}{C} t \le \psi(t)$, leading to
$$ \|h_{t}^{\mu}u\|_{L^2(\mu)} \le \frac{C}{t^{N/4}}.$$
We have consequently $\|h_{t}^{\mu}\|_{L^1(\mu) \rightarrow L^2(\mu)} \le \frac{C}{t^{N/4}}$. Using the self-adjointness of $h_{t}^\mu$, we deduce $\|h_{t}^{\mu}\|_{L^2(\mu) \rightarrow L^\infty(\mu)} \le \frac{C}{t^{N/4}}$ by duality. Finally the semigroup property 
\[
\|h_{t}^{\mu}\|_{L^1(\mu) \rightarrow L^\infty(\mu)} \le \|h_{t/2}^{\mu}\|_{L^1(\mu) \rightarrow L^2(\mu)} \|h_{t/2}^{\mu}\|_{L^2(\mu) \rightarrow L^\infty(\mu)}\]
implies \eqref{eq:111}. Then the existence of a measurable kernel $p^\mu_t$ of $h^\mu_t$ for any $t>0$ together with the bound \eqref{eq:222} is a direct consequence of Lemma \ref{lem:NashQ}, thanks to \cite[Th. (3.25)]{CKS}.

\end{proof}

\section{A non-smooth example}

To conclude, let us provide an example beyond the scope of smooth Riemannian manifolds to which Theorem \ref{th:weightedSobolev} applies. For any positive integer $n$, let $0_n$ be the origin of $\R^n$.





In \cite{Hattori}, Hattori built a complete four dimensional Ricci-flat manifold $(M,g)$ satisfying \eqref{eq:growthcondition} for some $\eta \in (3,4)$ and whose set of isometry classes of tangent cones at infinity $\mathcal{T}(M,g)$ is homeomorphic to $\mathbb{S}^1$. Of particular interest to us is one specific element of $\mathcal{T}(M,g)$, namely $(\R^3,\dist_0^\infty,0_3)$, where $\dist_0^\infty$ is the completion of the Riemannian metric $f g_{e}$ defined on $\R^3\backslash\{0_3\}$ as follows: $g_e$ is the Euclidean metric on $\R^3$ and for any $x=(x_1,x_2,x_3) \in \R^3 \backslash\{0_3\}$,
\[
f(x) = \int_0^\infty b_x(t) \di t \quad \text{with} \quad b_x(t)=\frac{1}{\sqrt{(x_1-t^\alpha)^2 + x_2^2 + x_3^2}} \, ,
\]
for some $\alpha>1$. Since $b_x(t) \sim t^{-\alpha}$ when $t \to +\infty$ and $b_x(t) \sim |x|^{-1}$ when $t \to 0$ for any $x \neq 0_3$, then $f(x)$ has no singularity on $\R^3 \backslash \{0\}$; however, $b_{0_3}(t)= t^{-\alpha}$ is not integrable on any neighborhood of $0$, so $f(x)$ has a singularity at $x=0_3$. In particular, $(\R^3,\dist_0^\infty,0_3)$ is a singular space with a unique singularity at $0_3$. Hattori proved that this space is neither a metric cone nor a polar metric space.

Let $\dist_g, v_g$ be the Riemannian distance and Riemannian volume measure associated to $g$, and $o \in M$ such that $(\R^3,\dist_0^\infty,0_3)$ is a tangent cone at infinity of $(M,\dist_g,o)$. Following a classical method (see e.g.~\cite{CheegerColding1}), one can equip $(\R^3,\dist_0^\infty,0_3)$ with a limit measure $\mu$ such that for some infinitesimal sequence $(\eps_i)_i \in (0,+\infty)$ the rescalings $(M,\dist_{g_i},\underline{v}_{g_i},o)$, where $g_i=\eps_i^2g$ and $\underline{v}_{g_i} = v_{g_i}(B_{1/\eps_i}(o))^{-1}v_{g_i}$, converge in the pointed measured Gromov-Hausdorff sense to $(\R^3,\dist_0^\infty,\mu,0_3)$. As $(M,g)$ is Ricci-flat, so are any of its rescalings, in particular they are all $\RCD(0,4)$ spaces. The stability of the $\RCD(0,4)$ condition with respect to pointed measured Gromov-Hausdorff convergence implies that $(\R^3,\dist_0^\infty,\mu,0_3)$ is $\RCD(0,4)$ too. Let us prove that $(\R^3,\dist_0^\infty,\mu,0_3)$ also satisfies \eqref{eq:growthcondition}. Set
$$
\Theta_{inf}(M,g) := \liminf\limits_{r \to +\infty} \frac{v_g(B_r(o))}{r^\eta} \quad \text{and} \quad \Theta_{sup}(M,g):= \limsup\limits_{r \to +\infty} \frac{v_g(B_r(o))}{r^\eta} \, \cdot
$$
Then for any $r>0$,
\begin{align*}
\frac{\mu(B_r(0_3))}{r^\eta} & = \lim\limits_{i \to +\infty} \frac{\underline{v}_{g_i}(B_r^{i}(o))}{r^\eta} = \lim\limits_{i \to +\infty} \frac{v_{g_i}(B_r^{i}(o))}{v_{g_i}(B_1^{i}(o)) r^\eta} = \lim\limits_{i \to +\infty} \frac{v_{g}(B_{r/\eps_i}(o))}{v_{g}(B_{1/\eps_i}(o)) r^\eta} \\
& = \lim\limits_{i \to +\infty} \frac{v_{g}(B_{r/\eps_i}(o))}{(r/\eps_i)^\eta} \frac{(1/\eps_i)^\eta}{v_{g}(B_{1/\eps_i}(o))} \, ,
\end{align*}
so
\[
\Lambda:=\frac{\Theta_{inf}(M,g)}{\Theta_{sup}(M,g)} \le \frac{\mu(B_r(0_3))}{r_j^\eta} \le \Lambda^{-1}
\]
from which \eqref{eq:growthcondition} follows with $\Theta_{inf}\ge \Lambda$ and $\Theta_{sup}\le \Lambda^{-1}$.


\begin{thebibliography}{}
\bibitem[ACDM15]{AmbrosioColomboDiMarino}
      \textsc{L. Ambrosio, M. Colombo, S. Di Marino}:
      \textit{Sobolev spaces in metric measure spaces: reflexivity and lower semicontinuity of slope},
      Adv. Studies in Pure Math. {\bf 67} (2015), 1--58.

\bibitem[AGS14b]{AmbrosioGigliSavare14}
	\textsc{L. Ambrosio, N. Gigli, G. Savar\'e}:
	\textit{Metric measure spaces with Riemannian Ricci curvature bounded from below},
	Duke Math. J. \textbf{163} (2014), 1405--1490.

\bibitem[BBGL12]{BakryBolleyGentilMaheux}
       \textsc{D. Bakry, F. Bolley, I. Gentil, P. Maheux}:
       \textit{Weighted Nash inequalities},
        Revista Matematica Iberoamericana \textbf{28}(3) (2012), 879--906.

\bibitem[BCLS95]{BakryCoulhonLedouxSaloff-Coste}
       \textsc{D. Bakry, T. Coulhon, M. Ledoux, L. Saloff-coste}:
       \textit{Sobolev inequalities in disguise}, 
        Indiana Univ. Math. J \textbf{44}(4) (1995), 1033--1074.

\bibitem[BS18]{BruéSemola}
	\textsc{E. Brué, D. Semola}:
	\textit{Constancy of the dimension for $\RCD(K,N)$ spaces via regularity of Lagrangian flows},
     to appear in Comm. Pure and Applied Math., ArXiV preprint 1804.07128v1 (2018).

\bibitem[BBI01]{BuragoBuragoIvanov} 
	\textsc{D. Burago, Y. Burago, S. Ivanov}:
	\textit{A course in metric geometry},
	Graduate Studies in Mathematics, 33, American Mathematical Society, Providence, RI, xiv+45 pp (2001).

\bibitem[CKS87]{CKS}
          \textsc{E. Carlen, S. Kusuoka, D.W. Stroock}:
          \textit{Upper Bounds for symmetric Markov transition functions},
          Annales de l'I.H.P. Probabilités et statistiques \textbf{23} (1987), 245--287.   

\bibitem[CM17]{CavallettiMondino2}
          \textsc{F. Cavalletti, A. Mondino}:
          \textit{Sharp geometric and functional inequalities in metric measure spaces with lower Ricci curvature bounds},
          Geom. Topol. \textbf{21}(1) (2017), 603--645.   

\bibitem[Ch99]{Cheeger}
         \textsc{J. Cheeger}: \textit{Differentiability of Lipschitz functions on metric measure spaces}. 
         Geom. Funct. Anal. \textbf{9} (1999), 428--517.
         
\bibitem[CC97]{CheegerColding1}
          \textsc{J. Cheeger, T. H. Colding}:
          \textit{On the structure of spaces with Ricci curvature bounded below, I,}  J. Differential Geom. \textbf{46} (1997), 406--480.

\bibitem[DG18]{GigliDeP}
        \textsc{G. De Philippis, N. Gigli}:
        \textit{Non-collapsed spaces with Ricci curvature bounded from below},
 J. Éc. polytech. Math. \textbf{5} (2018), 613–650.



\bibitem[G18]{Gigli}
        \textsc{N. Gigli}:
        \textit{Nonsmooth differential geometry --
An approach tailored for spaces with Ricci curvature bounded from below},
        Mem. Am. Math. Soc. \textbf{251} (2018), 1--161

\bibitem[GHL09]{Grigor'YanHuLau}
	\textsc{A. Grigor'yan, J. Hu, K.-S. Lau}:
	\textit{Heat kernels on metric spaces with doubling measure}.
	Fractal Geometry and Stochastics {I}{V}, Progress in Probability \textbf{61} (2009), 3--44. 

\bibitem[GS05]{Grigor'yanSaloff-Coste}
	\textsc{A. Grigor'yan, L. Saloff-Coste}:
	\textit{Stability results for Harnack inequalities},
	 Ann. Inst. Fourier \textbf{55}(3) (2005), 825--890. 

\bibitem[HK00]{HajlaszKoskela}
       \textsc{P. Hajlasz, P. Koskela}:
        \textit{Sobolev met Poincaré}, 
        Me. Am. Math. Soc. \textbf{145} (2000), 1--101. 
        
\bibitem[Ha17]{Hattori}
       \textsc{K. Hattori}:
        \textit{The nonuniqueness of the tangent cones at infinity of Ricci-flat manifolds}, 
        Geom. Topol. \textbf{21}(5) (2017), 2683--2723. 

\bibitem[He11]{Hein}
       \textsc{H.-J. Hein}:
        \textit{Weighted Sobolev inequalities under lower Ricci curvature bounds},
        Proc. of the Am. Math. Soc. \textbf{139} (2011), 2943--2955.
        
\bibitem[HeK98]{HeinonenKoskela}
       \textsc{J. Heinonen, P. Koskela}:
        \textit{Quasiconformal maps in metric spaces with controlled geometry}, 
        Acta Math. \textbf{181} (1998), 1--101.





\bibitem[LY86]{LiYau}
       \textsc{P. Li, S.-T. Yau}:
       \textit{On the parabolic kernel of the Schrödinger operator}, 
       Acta Math. \textbf{156} (1986), 153-201.

\bibitem[LV09]{LottVillani}
\textsc{J. Lott, C. Villani}:
\textit{Ricci curvature for metric-measure spaces via optimal transport}, 
Ann. of Math. \textbf{169} (2009), 903--991.

\bibitem[LV07]{LottVillani2}
\textsc{J. Lott, C. Villani}:
\textit{Weak curvature conditions and functional inequalities}, 
J. Funct. Anal. \textbf{245} (2007), 311--333.

\bibitem[MS95]{MaheuxSaloffCoste}
       \textsc{P. Maheux, L. Saloff-Coste}:
       \textit{Analyse sur les boules d'un opérateur sous-elliptique},
        Mathematische Annalen \textbf{303}(4) (1995), 713-740. 

\bibitem[M09]{Minerbe}
       \textsc{V. Minerbe}:
        \textit{Weighted Sobolev inequalities and Ricci flat manifolds},
        G.A.F.A. \textbf{18}(5) (2009), 1696--1749.


\bibitem[Oh17]{Ohta17}
        \textsc{S.-I. Ohta}:
\textit{Some functional inequalities on non-reversible Finsler manifolds,}
      Proc. Indian Acad. Sci. Math. Sci. \textbf{127} (2017), 833--855.

\bibitem[Pa14]{Papadopoulos}
{\sc A.~Papadopoulos}: {\it Metric spaces, convexity and non-positive curvature}, Second edition. IRMA Lectures in Mathematics and Theoretical Physics \textbf{6}, European Mathematical Society (EMS), Zürich, 2014.


\bibitem[Pr15]{Profeta}
		\textsc{A.Profeta}:
		\textit{The sharp Sobolev inequality on metric measure spaces with lower Ricci curvature bounds},
		Pot. Anal. \textbf{43} (2015), 513--529.

\bibitem[Raj12]{Rajala}
        \textsc{T. Rajala}:
        \textit{Local Poincar\'e inequalities from stable curvature conditions on metric spaces},
        Calc. Var. Partial Differential Equations \textbf{44}(3) (2012), 477--494.

\bibitem[SC02]{Saloff-Coste}
{\sc L.~Saloff-Coste}: {\it Aspect of Sobolev-type inequalities}, London Mathematical Society Lecture Note Series (No. 289), Cambridge University Press, 2002.

\bibitem[Sc79]{Schep}
	\textsc{A.~R.~Schep}:
	\textit{Kernel operators},
	 Indagationes Mathematicae \textbf{82}(1) (1979), 39--53.

\bibitem[St95]{Sturm95}
	\textsc{K.-T. Sturm}:
	\textit{Analysis on local Dirichlet spaces. II. Upper Gaussian estimates for the fundamental
solutions of parabolic equations},
	 Osaka J. Math. \textbf{32}(2) (1995), 275--312.

\bibitem[St96]{Sturm96}
	\textsc{K.-T. Sturm}:
	\textit{Analysis on local Dirichlet spaces. III. The parabolic Harnack inequality},
	J. Math. Pures Appl. \textbf{75}(9) (1996) 273--297.

\bibitem[St06]{Sturm06}
	\textsc{K.-T. Sturm}:
	\textit{On the geometry of metric measure spaces, I and II},
	Acta Math. \textbf{196} (2006), 65--131 and 133--177.

\bibitem[T]{T}
        \textsc{D. Tewodrose}:
        \textit{Weighted Sobolev inequalities and volume growth on metric measure spaces}, in preparation.

\bibitem[T20]{Tewodrose2}
        \textsc{D. Tewodrose}:
        \textit{Adimensional weighted Sobolev inequalities in PI spaces}, ArXiV preprint: 2006.10493, (2020).

\bibitem[Vi09]{Villani}
{\sc C.~Villani}: {\it Optimal transport. Old and new}, vol.~338 of Grundlehren
  der Mathematischen Wissenschaften, Springer-Verlag, Berlin, 2009.
\end{thebibliography}
\end{document}